\title[A relative spannedness]{A relative 
spannedness for log canonical pairs and 
quasi-log canonical pairs}
\author{Osamu Fujino}
\date{2020/11/29, version 0.15}
\subjclass[2010]{Primary 14E30; Secondary 14F17}
\keywords{quasi-log schemes, 
quasi-log canonical pairs, log canonical pairs, 
Fujita's $\Delta$-genera, basepoint-freeness}
\address{Department of 
Mathematics, Graduate School of Science, 
Osaka University, Toyonaka, Osaka 560-0043, Japan}
\email{fujino@math.sci.osaka-u.ac.jp}
\DeclareMathOperator{\Nqlc}{Nqlc}
\DeclareMathOperator{\Nklt}{Nklt}
\DeclareMathOperator{\Nlc}{Nlc}
\DeclareMathOperator{\NLC}{NLC}
\DeclareMathOperator{\Bs}{Bs}
\DeclareMathOperator{\Coker}{Coker}
\DeclareMathOperator{\Supp}{Supp}
\newtheorem{thm}{Theorem}[section]
\newtheorem{lem}[thm]{Lemma}
\newtheorem{prop}[thm]{Proposition}
\newtheorem{cor}[thm]{Corollary}
\newtheorem*{claim}{Claim}
\theoremstyle{definition}
\newtheorem{step}{Step}
\newtheorem{ex}[thm]{Example}
\newtheorem{defn}[thm]{Definition}
\newtheorem{rem}[thm]{Remark}
\newtheorem*{ack}{Acknowledgments}  
\begin{document}

\maketitle 

\begin{abstract}
We establish a relative spannedness for log canonical 
pairs, which is a generalization of the basepoint-freeness for 
varieties with log-terminal singularities by Andreatta--Wi\'sniewski. 
Moreover, we establish a generalization for quasi-log canonical 
pairs.  
\end{abstract}

\tableofcontents

\section{Introduction}\label{f-sec1} 

The main purpose of this paper is to establish the 
following relative spannedness for log canonical pairs. 

\begin{thm}[{Relative spannedness for log canonical 
pairs, see \cite
[Theorem, Remark 3.1.2, and 
Theorem 5.1]{andreatta-w}}]\label{f-thm1.1}
Let $(X, \Delta)$ be a log canonical 
pair and let $f\colon X\to Y$ be a projective surjective morphism onto a variety 
$Y$ such that $-(K_X+\Delta)$ is $f$-ample. 
Let $L$ be a Cartier divisor on $X$. 
Assume that $K_X+\Delta+rL$ is relatively numerically trivial 
over $Y$ for some positive real number $r$. 
Let $F$ be a fiber of $f$. 
Then the dimension of every positive-dimensional 
irreducible component of 
$F$ is $\geq r-1$. We further assume that $\dim F<r+1$. 
Then 
$
f^*f_*\mathcal O_X(L)\to \mathcal O_X(L)
$
is surjective at every point of $F$. 
\end{thm}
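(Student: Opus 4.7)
The first move is to pass to the local setting on $Y$: since the assertion is local, we shrink $Y$ to an affine neighbourhood of $y = f(F)$. Next, Kleiman's criterion, applied to curves contracted by $f$ together with $K_X + \Delta + rL \equiv_Y 0$ and the $f$-ampleness of $-(K_X + \Delta)$, shows that $L$ is $f$-ample: every such curve $C$ satisfies $L \cdot C = -\frac{1}{r}(K_X + \Delta) \cdot C > 0$. Consequently $L - (K_X + \Delta) \equiv_Y (r+1)L$ is $f$-ample, and I would invoke Fujino's relative vanishing theorem for log canonical pairs to obtain $R^i f_* \mathcal{O}_X(L) = 0$ for $i > 0$. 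Standard cohomology-and-base-change then identifies $(f_* \mathcal{O}_X(L))_y \otimes k(y)$ with $H^0(F, \mathcal{O}_F(L|_F))$, reducing the surjectivity of $f^* f_* \mathcal{O}_X(L) \to \mathcal{O}_X(L)$ along $F$ to the basepoint-freeness of the complete linear system of $L|_F$ on $F$.

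\textbf{Dimension bound.} For a positive-dimensional irreducible component $F_0 \subset F$, I plan to apply the relative cone theorem for log canonical pairs (Ambro--Fujino) to $\overline{NE}(X/Y)$, which is entirely $(K_X + \Delta)$-negative since $K_X + \Delta \equiv_Y -rL$ and $L$ is $f$-ample. This produces a $(K_X + \Delta)$-negative extremal ray $R$ represented by a rational curve $C \subset F_0$ with $L \cdot C = 1$, hence of length $\ell(R) = r$. The core of the bound is then an Ionescu--Wi\'sniewski-type fiber-dimension inequality extended to the log canonical category: every positive-dimensional fiber of the contraction of $R$ has dimension at least $\ell(R) - 1 = r - 1$, and since those fibers are contained in $F_0$, we conclude $\dim F_0 \geq r - 1$. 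Under the extra hypothesis $\dim F < r + 1$, every positive-dimensional component $F_0$ then satisfies $r - 1 \leq \dim F_0 < r + 1$, the borderline regime of Fujita's classification.

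\textbf{Basepoint-freeness on the fiber.} To close the argument I would equip $F$ with its natural quasi-log canonical (qlc) structure inherited from $(X, \Delta)$ via adjunction; under this structure $L|_F$ is a polarization satisfying $(K_X + \Delta)|_F + r L|_F \equiv 0$. A qlc analogue of Fujita's theorem---polarized qlc pairs whose dimension does not exceed the adjunction index $r$ have $\Delta$-genus zero and their polarizing Cartier divisor is basepoint-free---would then yield the basepoint-freeness of $L|_F$. The principal obstacle is precisely this last step: transferring Fujita's classical $\Delta$-genus classification to the qlc setting, where $F$ may be reducible, non-reduced, or non-normal. My plan is to analyse $F$ stratum-by-stratum through the qlc adjunction machinery developed in the paper, and to invoke Fujino's basepoint-free theorem for qlc pairs to convert the numerical and structural data of $(F, L|_F)$ into genuine basepoint-freeness.
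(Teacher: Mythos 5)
Your setup (shrinking $Y$, deducing $f$-ampleness of $L$, reducing surjectivity of $f^*f_*\mathcal O_X(L)\to\mathcal O_X(L)$ along $F$ to freeness of $L$ on some scheme structure on $F$ plus surjectivity of the restriction of global sections) matches the paper, but both of your main steps have genuine gaps. For the dimension bound, you invoke a rational curve $C\subset F_0$ spanning an extremal ray with $L\cdot C=1$ and an Ionescu--Wi\'sniewski fiber-dimension inequality ``extended to the log canonical category.'' Neither ingredient is available: the cone theorem over $Y$ gives extremal rays spanned by rational curves somewhere in the fibers of $f$, not inside the chosen component $F_0$; a curve with $L\cdot C=1$ need not exist (one only gets $\ell(R)\geq r$); the positive-dimensional fibers of the contraction of $R$ need not be contained in $F_0$; and the Ionescu--Wi\'sniewski inequality in the lc category is essentially the Andreatta--Wi\'sniewski statement this theorem is generalizing, so citing it is circular. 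The paper avoids all of this with an Euler-characteristic argument: it pulls back $n+1$ general hyperplanes through $f(F)$ to form $B$, takes the log canonical threshold $c$ at the generic point of $F'$ so that $F'$ becomes an lc center of $(X,\Delta+cB)$, and shows that the degree-$(\dim F')$ polynomial $\chi(X',\mathcal I_{X'_{-\infty}}\otimes\mathcal O_{X'}(tL))$ vanishes at every integer in $(-r,-1]$ by the vanishing theorem for quasi-log schemes, forcing $\dim F'\geq r-1$.

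For the basepoint-freeness, your plan to ``equip $F$ with its natural qlc structure inherited from $(X,\Delta)$ via adjunction'' does not make sense: $F$ is a fiber of $f$, not a union of qlc strata of $[X,K_X+\Delta]$, so qlc adjunction gives it no structure, and the structure it eventually acquires in the paper is that of a \emph{non-qlc locus} (of $(X,\Delta+B)$), not a qlc pair, so ``Fujino's basepoint-free theorem for qlc pairs'' does not apply directly. The missing mechanism is exactly the content of Lemma \ref{f-lem5.1} together with Theorem \ref{f-thm1.6}: one climbs the jumping numbers $0\leq c_0<\cdots<c_k=1$ of $(X,\Delta)$ with respect to $B$, forming quasi-log schemes $Z_i$ (unions of lc centers and non-qlc loci) with $(Z_{i+1})_{-\infty}\subset Z_i$ and $Z_{k-1}=F$ set-theoretically; Theorem \ref{f-thm1.6} (whose hypothesis $r>\dim-1$ is where $\dim F<r+1$ enters, and whose proof is where Fujita's $\Delta$-genus is actually used) is applied at each stage with the inductive input that $|L|$ is already free on $(Z_{i+1})_{-\infty}$, and the vanishing theorem gives surjectivity of $H^0(X,\mathcal O_X(L))\to H^0(Z_i,\mathcal O_{Z_i}(L))$. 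Without this ladder your final paragraph is a statement of the difficulty rather than a proof.
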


As an easy consequence of Theorem \ref{f-thm1.1}, we have: 

\begin{cor}[{see \cite[Theorem 1]{fujita-polarized}}]\label{f-cor1.2}
Let $(X, \Delta)$ be a log canonical 
pair with $\dim X=n$ and let $f\colon X\to Y$ be a projective 
morphism onto a variety $Y$. 
Let $L$ be an $f$-ample Cartier divisor on $X$. 
Then $K_X+\Delta+(n+1)L$ is $f$-nef. 
Moreover, if $\dim Y\geq 1$, then 
$K_X+\Delta+nL$ is $f$-nef. 
\end{cor}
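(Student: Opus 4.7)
The plan is to argue by contradiction, using the relative cone and contraction theorems for log canonical pairs to reduce both assertions directly to Theorem~\ref{f-thm1.1}.

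First I will assume $K_X+\Delta+(n+1)L$ is not $f$-nef and seek a contradiction. By the relative cone theorem for log canonical pairs (Ambro--Fujino), since $L$ is $f$-ample there exists a $(K_X+\Delta)$-negative extremal ray $R$ of $\overline{NE}(X/Y)$ with $(K_X+\Delta+(n+1)L)\cdot R<0$ (any $\alpha\in \overline{NE}(X/Y)$ on which $K_X+\Delta+(n+1)L$ is negative necessarily has $(K_X+\Delta)\cdot\alpha<0$ since $L\cdot\alpha>0$, and so must spill into the $(K_X+\Delta)$-negative part of the cone decomposition). Let $r>0$ be the unique real number with $(K_X+\Delta+rL)\cdot R=0$; since $L\cdot R>0$, we have $r>n+1$. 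By the corresponding contraction theorem, $R$ is contracted by a projective morphism $g\colon X\to Z$ over $Y$ with $g_*\mathcal{O}_X=\mathcal{O}_Z$, and by construction $-(K_X+\Delta)$ is $g$-ample while $K_X+\Delta+rL$ is numerically trivial over $Z$. Since $R\neq 0$, $g$ has at least one positive-dimensional fiber $F$. Applying Theorem~\ref{f-thm1.1} to $g$, every positive-dimensional irreducible component of $F$ has dimension $\geq r-1>n=\dim X$, which is absurd.

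For the moreover part I would run the same argument under the assumption $\dim Y\geq 1$ with $n+1$ replaced by $n$. This yields $r>n$ and a contraction $g\colon X\to Z$ over $Y$ with $K_X+\Delta+rL$ numerically trivial over $Z$. Theorem~\ref{f-thm1.1} gives $\dim F\geq r-1>n-1$ for any positive-dimensional irreducible component of a fiber; as $\dim F$ is an integer, $\dim F\geq n$. Since $X$ is irreducible of dimension $n$, this forces $F=X$ set-theoretically, so $Z$ is a single point and the surjective morphism $f$ factors through a point, contradicting $\dim Y\geq 1$.

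The main thing to watch out for is the input from the minimal model program: one needs the cone and contraction theorems for $(K_X+\Delta)$-negative extremal rays in the full log canonical category and over an arbitrary base $Y$, which is nontrivial but established in the literature. Once this is granted, Theorem~\ref{f-thm1.1} functions as a sharp length-of-extremal-ray estimate and the dimensional count is essentially automatic; no further delicate computation is required.
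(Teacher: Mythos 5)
Your proposal is correct and follows essentially the same route as the paper: argue by contradiction, use the cone and contraction theorem for log canonical pairs (\cite[Theorem 1.1]{fujino-fund}) to produce a contraction over $Y$ with relative Picard number one on which $K_X+\Delta+rL$ is relatively numerically trivial for some $r>n+1$ (resp.\ $r>n$), and then invoke the fiber-dimension bound of Theorem \ref{f-thm1.1} to reach a contradiction. The only difference is that you spell out details the paper leaves implicit, such as why the offending class forces a $(K_X+\Delta)$-negative extremal ray and why the ``moreover'' case forces the contraction target to be a point.
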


By Theorem \ref{f-thm1.1}, we can quickly recover 
the basepoint-freeness obtained by Andreatta and Wi\'sniewski 
in \cite{andreatta-w}. 

\begin{cor}[{Relative spannedness for kawamata log terminal 
pairs, see \cite
[Theorem, Remark 3.1.2, and 
Theorem 5.1]{andreatta-w}}]\label{p-cor1.3}
In Theorem \ref{f-thm1.1}, 
we further assume that $(X, \Delta)$ is kawamata log terminal 
and that $\dim X=\dim Y$. 
Then the dimension of every positive-dimensional 
irreducible component of $F$ 
is $\geq \lfloor r\rfloor$. 
Moreover, if $\dim F\leq r+1$, then 
$
f^*f_*\mathcal O_X(L)\to \mathcal O_X(L)
$
is surjective at every point of $F$. 
\end{cor}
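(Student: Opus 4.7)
My plan is to deduce Corollary \ref{p-cor1.3} from Theorem \ref{f-thm1.1} by combining integrality of fiber dimensions with a small klt perturbation in the borderline integer case. Applying Theorem \ref{f-thm1.1} to $(X,\Delta)$ with the given $r$ yields, for every positive-dimensional irreducible component $F'$ of $F$, the bound $\dim F' \geq r-1$, and the surjectivity of $f^*f_*\mathcal O_X(L) \to \mathcal O_X(L)$ at $F$ whenever $\dim F < r+1$. Since $\dim F'$ is a non-negative integer, this sharpens automatically to $\dim F' \geq \lceil r-1\rceil$. If $r \notin \mathbb{Z}$, then $\lceil r-1\rceil = \lfloor r\rfloor$, and integrality of $\dim F$ turns the hypothesis $\dim F \leq r+1$ into $\dim F < r+1$; so the non-integer case is entirely absorbed by Theorem \ref{f-thm1.1} together with integrality, without yet using the klt hypothesis or $\dim X = \dim Y$.

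In the remaining case $r \in \mathbb{Z}_{>0}$, I reduce to the non-integer case by perturbing $\Delta$. If $f$ is finite, the dimension statement is vacuous and $f^*f_*\mathcal O_X(L) \to \mathcal O_X(L)$ is surjective because $f$ is affine, so assume $f$ is birational. Since $\dim X = \dim Y$, its exceptional locus contains a nonzero divisor $E = \sum_j E_j$. Using that $(X,\Delta)$ is klt (so the coefficients of the $E_j$ in $\Delta$ are strictly less than $1$), together with the $f$-ampleness of $L$ (which follows from $rL \equiv_Y -(K_X+\Delta)$ and $r > 0$) and the negativity lemma, I would construct, for every sufficiently small $\varepsilon > 0$, an effective $\mathbb{R}$-Cartier divisor $\Theta$ supported on $E$ with $\Theta + \varepsilon L \equiv_Y 0$ and such that $(X, \Delta + \Theta)$ remains klt. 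Setting $\Delta^{\sharp} = \Delta + \Theta$ and $r^{\sharp} = r + \varepsilon$, one computes
\[
K_X + \Delta^{\sharp} + r^{\sharp} L \;\equiv_Y\; (K_X + \Delta + rL) + (\Theta + \varepsilon L) \;\equiv_Y\; 0,
\]
and $(X, \Delta^{\sharp})$ satisfies all hypotheses of Theorem \ref{f-thm1.1}. Choosing $\varepsilon$ with $r^{\sharp} \notin \mathbb{Z}$, the non-integer case applied to $(X, \Delta^{\sharp})$ with $r^{\sharp}$ delivers $\dim F' \geq \lfloor r^{\sharp} \rfloor = r$ and the surjectivity whenever $\dim F \leq r^{\sharp}+1$; in particular this covers $\dim F \leq r+1$.

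The main obstacle is the construction of $\Theta$. When $N^1(X/Y)$ has rank one near $F$, it is immediate: the unique prime exceptional divisor $E_j$ is anti-$f$-ample by the negativity lemma, so $L \equiv_Y -\alpha E_j$ for some $\alpha > 0$, and $\Theta = \varepsilon \alpha E_j$ works as soon as $\varepsilon$ is small enough that the coefficient of $E_j$ in $\Delta + \Theta$ stays strictly below $1$. In the general case, one localizes on $Y$ around $f(F)$, or passes to the extremal face of $\overline{NE}(X/Y)$ spanned by the curves contained in $F$, and extracts $\Theta$ as an effective combination of exceptional divisors from that face; the klt slack in $\Delta$ then ensures $(X, \Delta + \Theta)$ stays log canonical.
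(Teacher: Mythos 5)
Your reduction of the non-integer case to Theorem \ref{f-thm1.1} via integrality of $\dim F'$ and $\dim F$ is correct, and your overall strategy for integer $r$ --- perturb $(X,\Delta)$ to a klt pair $(X,\Delta^\sharp)$ with $K_X+\Delta^\sharp+(r+\varepsilon)L\equiv_Y 0$ --- is exactly the strategy of the paper. But your construction of the perturbing divisor has a genuine gap. You claim that if $f$ is not finite then (after reducing to the birational case) ``its exceptional locus contains a nonzero divisor $E$,'' and you build $\Theta$ as an effective combination of $f$-exceptional divisors with $\Theta+\varepsilon L\equiv_Y 0$. This fails in general: for a generically finite (even birational) morphism of normal varieties the exceptional locus can have codimension $\geq 2$, and the setting of the corollary includes precisely such cases --- a flipping contraction of a klt pair $(X,\Delta)$ has $-(K_X+\Delta)$ $f$-ample, $\dim X=\dim Y$, and no $f$-exceptional divisor whatsoever, so no divisor supported on the exceptional locus can be numerically equivalent to $-\varepsilon L$ over $Y$. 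Your rank-one and ``extremal face'' arguments both presuppose the existence of exceptional divisors spanning (part of) $N^1(X/Y)$, which is exactly what is missing. A secondary issue is the dichotomy ``finite or birational'': generically finite morphisms of degree $>1$ with positive-dimensional fibers also occur and are not covered by either branch as you state it.

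The paper's fix is simpler and avoids exceptional divisors entirely: shrink $Y$ to an affine neighborhood of $f(F)$; since $\dim X=\dim Y$ and $f$ is surjective, $f$ is generically finite, so $f_*\mathcal O_X(-L)\neq 0$ and hence there is an \emph{effective} Cartier divisor $D\sim -L$ (not supported on any exceptional locus). Since $(X,\Delta)$ is klt, $(X,\Delta+\varepsilon D)$ is klt for $0<\varepsilon\ll 1$, and $K_X+\Delta+\varepsilon D+(r+\varepsilon)L\equiv_Y 0$; Theorem \ref{f-thm1.1} applied to this pair gives both conclusions at once. If you replace your $\Theta$ by this $\varepsilon D$, the rest of your argument goes through.
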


The following easy example shows that the estimates 
on the lower bound of the dimension of the fiber are sharp. 

\begin{ex}\label{f-ex1.4}
Let $Y$ be a smooth variety 
with $\dim Y=n$. 
Let $f\colon X\to Y$ be a blow-up at a smooth 
point of $Y$ and let $E\simeq \mathbb P^{n-1}$ 
be the $f$-exceptional divisor on $X$. 
In this situation, $L:=-E$ is an $f$-ample Cartier divisor 
on $X$. 
We put $\Delta=E$. 
Then we obtain that $(X, \Delta)$ is log canonical 
and is not kawamata log terminal and that 
$K_X+\Delta+nL=f^*K_Y$ holds. 
\end{ex}

The following example shows that the assumption 
$\dim F<r+1$ in Theorem \ref{f-thm1.1} is sharp. 

\begin{ex}\label{f-ex1.5} 
Let $S$ be a Del Pezzo surface of degree one, that is, 
$(-K_S)^2=1$. 
We can easily check that 
$\dim _{\mathbb C}H^0(S, \mathcal O_S(-K_S))=2$ and 
that $\Bs\!|-K_S|$ is a point. 
In particular, $|-K_S|$ is not basepoint-free. We take 
a positive integer $m$ such that 
$$
\Phi_{|-mK_S|}\colon S\hookrightarrow \mathbb P^N
$$ 
is a projectively normal embedding. Let $Y\subset \mathbb A^{N+1}$ 
be the cone over $S$. Then $Y$ has only kawamata log terminal singularities. 
Let $f\colon X\to Y$ be the blow-up at the vertex $P\in Y$ and let 
$F\simeq S$ be the exceptional divisor of $f$. 
We put $\Delta=F$. 
Then $X$ is smooth, $(X, \Delta)$ is log canonical and 
is not kawamata log terminal, 
and $-(K_X+\Delta)$ is $f$-ample. 
We put $L=-(K_X+\Delta)$. 
Then $K_X+\Delta+rL$ with $r=1$ is obviously 
relatively numerically trivial over $Y$. 
We note that $\dim F=\dim S=2=r+1$. 
By adjunction, we have $L|_F=-K_F$. 
Since $F\simeq S$, $|L|_F|$ is not basepoint-free. 
This implies that 
$$
f^*f_*\mathcal O_X(L)\to \mathcal O_X(L)
$$ 
is not surjective at some point of $F$. 
\end{ex}

\medskip 

The original proof of Theorem \ref{f-thm1.1} 
and Corollary \ref{p-cor1.3} for varieties with only kawamata log terminal 
singularities in 
\cite{andreatta-w} is based on Koll\'ar's modified basepoint-freeness 
method in \cite{kollar-effective}. 
Although Koll\'ar's method was already generalized for 
log canonical pairs and quasi-log canonical pairs 
(see \cite{fujino-effective} and \cite{fujino-kollar-type}), 
we do not use it in this paper.  
Our proof of Theorem \ref{f-thm1.1} and 
Corollary \ref{p-cor1.3} heavily depends on 
the following basepoint-free theorem 
for projective quasi-log schemes. 

\begin{thm}[Spannedness for projective quasi-log schemes]\label{f-thm1.6}
Let $[X, \omega]$ be a projective quasi-log scheme 
and let $X_{-\infty}$ denote the non-qlc locus of $[X, \omega]$. 
We assume $\dim (X\setminus X_{-\infty})=n$. 
Let $\mathcal L$ be an ample line bundle on $X$ such that 
$\omega+r\mathcal L$ is numerically trivial with $r>n-1$. 
We further assume that $|\mathcal L|_{X_{-\infty}}|$ is basepoint-free. 
Then the complete linear system $|\mathcal L|$ is basepoint-free. 
\end{thm}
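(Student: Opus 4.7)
The plan is to handle prospective base points of $|\mathcal{L}|$ in two cases according to whether they lie in the non-qlc locus $X_{-\infty}$, and then to induct on the dimension of a minimal qlc stratum through points outside $X_{-\infty}$.

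First I would reduce to the qlc part via vanishing. Since $\mathcal{L}-\omega \equiv (r+1)\mathcal{L}$ is ample, Fujino's Kodaira-type vanishing for quasi-log canonical schemes yields
\[
H^{1}(X,\mathcal{I}_{X_{-\infty}}\otimes\mathcal{L}) = 0,
\]
so the restriction $H^{0}(X,\mathcal{L}) \twoheadrightarrow H^{0}(X_{-\infty},\mathcal{L}|_{X_{-\infty}})$ is surjective. Combined with the hypothesis that $|\mathcal{L}|_{X_{-\infty}}|$ is basepoint-free, this immediately removes every point of $X_{-\infty}$ from $\Bs|\mathcal{L}|$. It remains to find, for each $x \in X \setminus X_{-\infty}$, a section of $\mathcal{L}$ not vanishing at $x$.

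Fix such an $x$ and let $W$ be a minimal qlc stratum of $[X,\omega]$ through $x$; set $d=\dim W \le n$. I would induct on $d$. For the base case $d = 0$, the stratum $W = \{x\}$ is a reduced point. Adjunction for qlc schemes endows $W \cup X_{-\infty}$ with an induced quasi-log structure whose non-qlc locus is still $X_{-\infty}$, and the same vanishing argument applied to $\mathcal{I}_{W \cup X_{-\infty}}\otimes \mathcal{L}$ yields the surjection $H^{0}(X,\mathcal{L}) \twoheadrightarrow H^{0}(W \cup X_{-\infty},\mathcal{L}|_{W \cup X_{-\infty}})$; a nonzero section on $W$ lifts to the required section. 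For the inductive step $d > 0$, I would use a Fujita $\Delta$-genus argument to strictly shrink the minimal stratum through $x$. By qlc adjunction, $[W,\omega|_W]$ is quasi-log canonical and satisfies $\omega|_W + r\mathcal{L}|_W \equiv 0$ with $r > n-1 \ge d-1$. This is precisely Fujita's ``high coindex'' regime: vanishing on $W$ combined with Riemann--Roch bounds $h^{0}(W,m\mathcal{L}|_W)$ below in terms of $(\mathcal{L}|_W)^d$ in a way that outpaces the number of local equations at $x$ for suitable $m$. Hence there exists $D \in |m\mathcal{L}|_W|$ with $\mathrm{mult}_x D$ large enough that, for some small $\varepsilon > 0$, the perturbed pair $[W,\omega|_W + \tfrac{1-\varepsilon}{m}D]$ admits a qlc center through $x$ strictly inside $W$. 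Using Fujino's machinery for propagating qlc structures from a stratum back to the ambient scheme, one obtains a new qlc structure $[X,\omega']$ whose minimal qlc stratum through $x$ has dimension $d' < d$, and the induction closes.

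The main obstacle is the inductive step. The Fujita-type multiplicity bound is genuine work: it rests on translating the numerical condition $r > d-1$, via Fujino's vanishing and Riemann--Roch on the qlc pair $[W,\omega|_W]$, into an inequality of the form $h^{0}(W,m\mathcal{L}|_W) > \binom{m+d-1}{d}$ for well-chosen $m$, which is exactly where the strict inequality $r > n-1$ is consumed. Equally delicate is globalising the perturbation back to $X$ so that the modified structure is still quasi-log canonical with non-qlc locus contained in $X_{-\infty}$ and with minimal stratum properly inside $W$; this requires Fujino's adjunction and inversion-of-adjunction formalism together with a careful choice of $\varepsilon$. Once both are in place, the dimension of the minimal qlc stratum through $x$ strictly decreases at each step, so the induction terminates at $d = 0$ and the base case supplies the desired section.
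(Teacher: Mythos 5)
Your opening reduction is fine and matches the paper: vanishing for $\mathcal I_{X_{-\infty}}\otimes\mathcal L$ kills base points on $X_{-\infty}$, and adjunction plus an induction over qlc strata is indeed the right skeleton. The fatal problem is the inductive step. The inequality you propose, $h^0(W,m\mathcal L|_W)>\binom{m+d-1}{d}$, only produces $D\in|m\mathcal L|_W|$ with $\mathrm{mult}_xD\geq m$, i.e.\ $\mathrm{mult}_x\bigl(\tfrac1mD\bigr)\geq 1$; for $d\geq 2$ this does \emph{not} force a non-klt (or non-qlc) center through $x$ (e.g.\ $m$ general lines through a point of $\mathbb P^2$, each with coefficient $1/m$, give a klt pair). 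To force a center through the chosen point $x$ you need $\mathrm{mult}_x\bigl(\tfrac1mD\bigr)$ on the order of $d$, which by the same parameter count requires roughly $(\mathcal L|_W)^d>d^d$ --- and there is no such lower bound here: in the $\Delta$-genus-zero regime one has $(\mathcal L|_W)^d=h^0(\mathcal L|_W)-d$, which can equal $1$ (e.g.\ $(\mathbb P^d,\mathcal O(1))$, or the degree-one Del Pezzo of Example \ref{f-ex4.2}, which is exactly the boundary case showing $r>n-1$ is sharp). So the ``Fujita high-coindex regime'' cannot be implemented as a multiplicity/Riemann--Roch cut at a prescribed point. A second unsupported step is ``globalising the perturbation back to $X$'': there is no inversion of adjunction that lifts a boundary chosen on a qlc stratum $W$ to a quasi-log structure on the ambient $X$; the paper only ever perturbs on $X$ itself.

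The paper's actual mechanism is different in both places where you need content. For an irreducible top-dimensional qlc stratum it uses Lemma \ref{f-lem4.1}: vanishing gives $\chi(X,\mathcal O_X(tL))=0$ for $t=-1,\dots,-(n-1)$, whence $h^0(\mathcal L)=\mathcal L^n+n$, i.e.\ $\Delta(X,\mathcal L)=0$, and then Fujita's structure theorem $\dim\Bs|\mathcal L|<\Delta(X,\mathcal L)$ (Theorem \ref{f-thm2.8}) --- a genuine theorem about polarized varieties of $\Delta$-genus zero that is not recoverable from a multiplicity count. For the remaining case it tie-breaks with a \emph{general member $D$ of $|\mathcal L|$ itself}: the threshold $c$ satisfies $c\leq 1$ for the soft reason in Proposition \ref{f-prop3.4} (otherwise $\mathcal O_X\to f_*\mathcal O_Y(\lceil-(B_Y^{<1})\rceil)$ would factor through $\mathcal O_X(D)$), and by Bertini the new qlc center is forced to meet $\Bs|\mathcal L|$ without choosing any point or estimating any multiplicity; since $r-c>\dim C-1$, induction on $\dim(X\setminus X_{-\infty})$ closes the argument. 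You would need to replace your inductive step by something of this kind for the proof to go through.
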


We prove Theorem \ref{f-thm1.6} by the 
theory of quasi-log schemes with the aid of Fujita's 
theory of $\Delta$-genera (see \cite{fujita1}). 
Then Theorem \ref{f-thm1.1} will be proved with an inductive argument 
via Theorem \ref{f-thm1.6}. 

We can further generalize Theorem \ref{f-thm1.1} 
for quasi-log canonical pairs. 
The precise statement is as follows: 

\begin{thm}[Relative spannedness for quasi-log canonical 
pairs]\label{f-thm1.7}
Let $[X, \omega]$ be a quasi-log canonical 
pair and let $\varphi\colon X\to W$ be a projective surjective 
morphism onto a scheme 
$W$ such that $-\omega$ is $\varphi$-ample. 
Let $\mathcal L$ be a line bundle on $X$. 
Assume that $\omega+r\mathcal L$ is relatively numerically trivial 
over $W$ for some positive real number $r$. 
Let $F$ be a fiber of $f$. 
Then the dimension of every positive-dimensional 
irreducible component of 
$F$ is $\geq r-1$. We further assume that $\dim F<r+1$. 
Then 
$
\varphi^*\varphi_*\mathcal L\to \mathcal L
$
is surjective at every point of $F$. 
\end{thm}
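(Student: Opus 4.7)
The plan is to imitate the proof strategy of Theorem \ref{f-thm1.1}, adapting it to the quasi-log canonical setting. The qlc hypothesis is actually more natural than the log canonical one here: the key extra ingredient, adjunction for qlc pairs (which equips a union of qlc strata with its own qlc structure), lets us restrict the data $[X,\omega]$, $\mathcal L$, $\varphi$ directly to the fiber without any intermediate reduction step. The role played by Kawamata--Viehweg vanishing in the log canonical argument is played here by the vanishing theorem for qlc pairs.

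First I would shrink $W$ to an affine open neighborhood of $\varphi(F)$ and, by a standard trick, reduce to the case where $F$ is the scheme-theoretic fiber over a closed point $w\in W$. Since $-\omega$ is $\varphi$-ample and $\omega+r\mathcal L\equiv 0$ over $W$, the line bundle $\mathcal L$ is $\varphi$-ample, so $\mathcal L|_F$ is ample and $(\omega+r\mathcal L)|_F\equiv 0$. By adjunction for quasi-log canonical pairs, each positive-dimensional irreducible component $F'$ of $F$ that is a qlc stratum of $[X,\omega]$ inherits a qlc structure $[F',\omega|_{F'}]$ with empty non-qlc locus; the fact that the positive-dimensional components of a fiber of a qlc contraction with $-\omega$ relatively ample are qlc strata is part of the qlc Fano-contraction package.

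To obtain the bound $\dim F'\geq r-1$, I would argue by contradiction: suppose $r>\dim F'+1$. Applying Theorem \ref{f-thm1.6} to $[F',\omega|_{F'}]$ (the hypothesis $r>\dim F'-1$ is satisfied) gives basepoint-freeness of $|\mathcal L|_{F'}|$. Cutting repeatedly by general members and using a Bertini-type adjunction for qlc pairs, descend the hypotheses to lower-dimensional qlc strata; combined with a Fujita $\Delta$-genus estimate this forces a contradiction with $r>\dim F'+1$. For the surjectivity under the hypothesis $\dim F<r+1$ we have $r>\dim F-1$, so Theorem \ref{f-thm1.6} applied directly to $[F,\omega|_F]$ (with empty non-qlc locus) yields basepoint-freeness of $|\mathcal L|_F|$. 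To lift sections, invoke the relative vanishing theorem for qlc pairs to get $R^1\varphi_*(\mathcal L\otimes\mathcal I_F)=0$ near $w$, so $\varphi_*\mathcal L\to H^0(F,\mathcal L|_F)$ is surjective in a neighborhood of $w$; combined with basepoint-freeness on $F$ this produces the surjectivity of $\varphi^*\varphi_*\mathcal L\to\mathcal L$ at every point of $F$.

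The hardest part will be handling the (possibly reducible or non-reduced) scheme-theoretic fiber $F$ cleanly and verifying that the inductive ``cut by a general section'' step preserves the qlc condition at each stage; the requisite Bertini-type adjunction and $\Delta$-genus contradiction in the qlc framework are the technical heart of the argument, and the very fact that the qlc structure is already present on $X$ (rather than manufactured from a log resolution) is what makes this proof cleaner than the original log canonical version.
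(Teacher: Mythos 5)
The central step of your argument does not work: you assert that the positive-dimensional irreducible components of the fiber $F$ are qlc strata of $[X,\omega]$ (``part of the qlc Fano-contraction package'') and then apply adjunction to get quasi-log structures $[F',\omega|_{F'}]$ and $[F,\omega|_F]$ with empty non-qlc locus. No such fact holds, and adjunction (Theorem \ref{f-thm2.13} (i)) only applies to unions of $X_{-\infty}$ with qlc strata. Already in the log canonical model case this fails: if $X$ is smooth, $\Delta=0$, and $f\colon X\to Y$ is a birational contraction with $-K_X$ $f$-ample, the pair $(X,0)$ is kawamata log terminal and has no qlc center other than $X$ itself, so no component of a positive-dimensional fiber is a qlc stratum. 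Without a quasi-log structure on $F$ or on $F'$, neither your application of Theorem \ref{f-thm1.6} to $[F,\omega|_F]$ nor your $\Delta$-genus contradiction for the dimension bound can even be set up. The sketch of the lower bound $\dim F'\geq r-1$ (``cutting repeatedly by general members \dots forces a contradiction'') is also too vague to assess, but it inherits the same defect.

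What the paper does instead --- and what is genuinely needed --- is to \emph{manufacture} a quasi-log structure in which $F$ appears. One pulls back $n+1$ general hyperplane sections of $W$ through $\varphi(F)$ to form $B=\sum_{i=1}^{n+1}\varphi^*B_i$, so that $[X,\omega+B]$ fails to be qlc exactly along $F$, i.e.\ $\Nqlc(X,\omega+B)=F$ set-theoretically (Lemma \ref{f-lem3.2}, Lemma \ref{f-lem6.1}, and \cite[Lemma 6.3.13]{fujino-foundations}). The jumping numbers $c_0<c_1<\cdots<c_k=1$ of Lemma \ref{f-lem6.1} then produce a filtration $Z_0\subset Z_1\subset\cdots\subset Z_{k-1}=F$ by unions of qlc centers of the perturbed structures $[X,\omega+c_iB]$, with $(Z_{i+1})_{-\infty}\subset Z_i$ and $\omega_i|_{Z_i}+r\mathcal L|_{Z_i}\equiv 0$. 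Theorem \ref{f-thm1.6} is applied inductively along this filtration (the vanishing theorem supplies surjectivity of $H^0(X,\mathcal L)\to H^0(Z_i,\mathcal L|_{Z_i})$, hence basepoint-freeness of $|\mathcal L|_{(Z_{i+1})_{-\infty}}|$ at each stage). Likewise, the bound $\dim F'\geq r-1$ is obtained by choosing $c$ so that $F'$ becomes a qlc center of $[X,\omega+cB]$ and computing $\deg\chi(X',\mathcal I_{X'_{-\infty}}\otimes\mathcal L^{\otimes t})$ with $X'=F'\cup\Nqlc(X,\omega+cB)$. This perturbation-and-filtration mechanism is absent from your proposal and cannot be bypassed.
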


As a corollary of Theorem \ref{f-thm1.7}, we have the following 
generalization of Corollary \ref{f-cor1.2}. 

\begin{cor}\label{f-cor1.8}
Let $[X, \omega]$ be a quasi-log canonical 
pair with $\dim X=n$ and let $\varphi\colon X\to W$ be a projective 
morphism onto a scheme $W$. 
Let $\mathcal L$ be a $\varphi$-ample line bundle on $X$. 
Then $\omega+(n+1)\mathcal L$ is $\varphi$-nef. 
We further assume that $X$ is irreducible and 
$\dim W\geq 1$. Then 
$\omega+n\mathcal L$ is $\varphi$-nef. 
\end{cor}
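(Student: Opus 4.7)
The plan is to argue by contradiction, using the relative cone and contraction theorems for quasi-log canonical pairs (available in the quasi-log theory) to reduce the assertion to a direct application of Theorem \ref{f-thm1.7}.

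First, suppose $\omega+(n+1)\mathcal{L}$ is not $\varphi$-nef. I would invoke the relative cone theorem for quasi-log canonical pairs to produce an $\omega$-negative extremal ray $R\subset \overline{NE}(X/W)$ on which $\omega+(n+1)\mathcal{L}$ is negative, and then apply the contraction theorem for qlc pairs to obtain a projective surjective morphism $\psi\colon X\to V$ over $W$ that contracts precisely the curves whose numerical class lies in $R$. Fixing a curve $C$ generating $R$ and setting $r:=-\omega\cdot C/\mathcal{L}\cdot C$, I get $r>n+1$ from $(\omega+(n+1)\mathcal{L})\cdot C<0$ together with $\mathcal{L}\cdot C>0$. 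By construction, $-\omega$ is $\psi$-ample and $\omega+r\mathcal{L}$ is $\psi$-numerically trivial, so all hypotheses of Theorem \ref{f-thm1.7} are satisfied for $\psi\colon X\to V$ with this value of $r$.

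Applying Theorem \ref{f-thm1.7} to the fiber $F$ of $\psi$ containing $C$, I conclude that every positive-dimensional irreducible component of $F$ has dimension at least $r-1>n$. But $F\subset X$ and $\dim X=n$, a contradiction. Hence $\omega+(n+1)\mathcal{L}$ is $\varphi$-nef. For the second assertion, I would rerun the same argument with $n+1$ replaced by $n$. This yields $r>n$, a contraction $\psi\colon X\to V$ over $W$, and a positive-dimensional irreducible component of the relevant fiber of dimension $\geq r-1>n-1$, hence of dimension exactly $n$. Since $X$ is irreducible of dimension $n$, this component must coincide with $X$, so $\psi$ collapses $X$ to a point of $V$. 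Then $\varphi(X)$ is a point as well, contradicting the surjectivity of $\varphi$ onto a scheme $W$ with $\dim W\geq 1$.

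The main obstacle is simply citing the correct formulation of the cone and contraction theorems in the quasi-log canonical setting; once these are in hand, the rest of the proof is a routine extremal-ray calculation wired to the spannedness input of Theorem \ref{f-thm1.7}. A secondary point to verify is that the contraction $\psi$ legitimately factors $\varphi$ through a morphism $V\to W$, so that the numerical-triviality statement needed in Theorem \ref{f-thm1.7} really holds relative to $\psi$; this follows from the universal property of the extremal contraction.
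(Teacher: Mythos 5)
Your proposal is correct and follows essentially the same route as the paper: the paper proves Corollary \ref{f-cor1.8} by citing the cone and contraction theorems for quasi-log canonical pairs and then rerunning the proof of Corollary \ref{f-cor1.2}, which is exactly your argument (contract an extremal ray to reduce to relative Picard number one, extract $r>n+1$, and contradict the fiber-dimension bound of Theorem \ref{f-thm1.7}). Your treatment of the second assertion, using irreducibility of $X$ to force the contracted fiber to be all of $X$ and thereby contradict $\dim W\geq 1$, matches the intended ``similarly'' step of the paper.
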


Since every quasi-projective semi-log canonical pair naturally becomes 
a quasi-log canonical pair by \cite[Theorem 1.1]{fujino-slc}, 
we can apply Theorem \ref{f-thm1.7} and Corollary \ref{f-cor1.8} 
to semi-log canonical pairs. 

\medskip 

We briefly explain the organization of this paper. 
In Section \ref{f-sec2}, we collect some basic definitions and 
quickly recall Fujita's theory of $\Delta$-genera and 
the theory of quasi-log schemes. 
In Section \ref{f-sec3}, we explain three 
useful lemmas for quasi-log schemes for the 
reader's convenience. 
In Section \ref{f-sec4}, we give a detailed proof of Theorem \ref{f-thm1.6}. 
It is a combination of Fujita's theory of $\Delta$-genera and 
the theory of quasi-log schemes. 
In Section \ref{f-sec5}, we prove Theorem \ref{f-thm1.1}. 
Our proof is different from Koll\'ar's modified basepoint-freeness 
method in \cite{kollar-effective} and is new. It uses 
the framework of quasi-log schemes. 
In Section \ref{f-sec6}, we treat Theorem \ref{f-thm1.7}, which is 
a generalization of Theorem \ref{f-thm1.1}. The idea of the proof of 
Theorem \ref{f-thm1.7} is completely the same as that of the 
proof of Theorem \ref{f-thm1.1}. However, the proof of 
Theorem \ref{f-thm1.7} is harder than that of Theorem \ref{f-thm1.1}. 

\begin{ack} 
The author was partly supported by JSPS KAKENHI 
Grant Numbers JP16H03925, JP16H06337. 
He thanks Kento Fujita, Haidong Liu, and 
Keisuke Miyamoto very much for 
pointing out some mistakes in a preliminary 
version of this paper. 
He thanks Kento Fujita for 
informing him of \cite{andreatta-w} when 
he was writing \cite{fujino-miyamoto} with 
Keisuke Miyamoto. 
Finally, he would like to thank the referee for some useful comments and 
suggestions. 
\end{ack}

We will work over $\mathbb C$, the complex number field, throughout 
this paper. In this paper, a {\em{scheme}} means a separated 
scheme of finite type over $\mathbb C$. 
A {\em{variety}} means an integral scheme, that is, 
an integral separated scheme of finite type over $\mathbb C$.
We will use the theory of quasi-log schemes discussed 
in \cite[Chapter 6]{fujino-foundations}. 
\section{Preliminaries}\label{f-sec2} 

In this section, we collect some basic definitions of 
the minimal model program and the theory of quasi-log schemes. 
For the details, see \cite{fujino-fund} and 
\cite{fujino-foundations}. We also mention Fujita's 
$\Delta$-genera (see \cite{fujita1}), which will play a 
crucial role in this paper. 

\subsection{Basic definitions}\label{f-subsec2.1}

Let us recall singularities of pairs and some related 
definitions. 

\begin{defn}\label{f-def2.1} 
Let $X$ be a variety and let $E$ be a prime divisor 
on $Y$ for some birational morphism 
$f\colon Y\to X$ from a normal variety $Y$. 
Then $E$ is called a divisor {\em{over}} $X$. 
\end{defn}

\begin{defn}[Singularities of pairs]\label{f-def2.2}
A {\em{normal pair}} $(X, \Delta)$ consists of a normal 
variety $X$ and an $\mathbb R$-divisor 
$\Delta$ on $X$ such that $K_X+\Delta$ is $\mathbb R$-Cartier. 
Let $f\colon Y\to X$ be a projective 
birational morphism from a normal variety $Y$. 
Then we can write 
$$
K_Y=f^*(K_X+\Delta)+\sum _E a(E, X, \Delta)E
$$ 
with 
$$f_*\left(\underset{E}\sum a(E, X, \Delta)E\right)=-\Delta, 
$$ 
where $E$ runs over prime divisors on $Y$. 
We call $a(E, X, \Delta)$ the {\em{discrepancy}} of $E$ with 
respect to $(X, \Delta)$. 
Note that we can define the discrepancy $a(E, X, \Delta)$ for 
any prime divisor $E$ over $X$ by taking a suitable 
resolution of singularities of $X$. 
If $a(E, X, \Delta)\geq -1$ (resp.~$>-1$) for 
every prime divisor $E$ over $X$, 
then $(X, \Delta)$ is called {\em{sub log canonical}} (resp.~{\em{sub 
kawamata log terminal}}). 
We further assume that $\Delta$ is effective. 
Then $(X, \Delta)$ is 
called {\em{log canonical}} and {\em{kawamata log terminal}} 
if it is sub log canonical and sub kawamata log terminal, respectively. 
We simply say that $X$ has only {\em{kawamata log terminal singularities}} 
when $(X, 0)$ is a kawamata log terminal pair. 

Let $(X, \Delta)$ be a normal pair. 
If there exist a projective birational morphism 
$f\colon Y\to X$ from a normal variety $Y$ and a prime divisor $E$ on $Y$ 
such that $(X, \Delta)$ is 
sub log canonical in a neighborhood of the 
generic point of $f(E)$ and that 
$a(E, X, \Delta)=-1$, then $f(E)$ is called a {\em{log canonical center}} of 
$(X, \Delta)$. 
\end{defn}

\begin{defn}[Operations for $\mathbb R$-divisors]\label{f-def2.3}
Let $V$ be an equidimensional 
reduced scheme. 
An {\em{$\mathbb R$-divisor}} $D$ on $V$ is 
a finite formal sum 
$$
\sum _{i=1}^l d_i D_i, 
$$ 
where $D_i$ is an irreducible reduced closed subscheme 
of $V$ of pure codimension one with 
$D_i\ne D_j$ for $i\ne j$ and 
$d_i$ is a real number for every $i$. 
We put 
$$
D^{<1}=\sum _{d_i<1}d_iD_i, \quad 
D^{=1}=\sum _{d_i=1} D_i, \quad \text{and} 
\quad D^{>1}=\sum _{d_i>1}d_iD_i. 
$$ 
For every real number $x$, $\lceil x\rceil$ is the integer 
defined by $x\leq \lceil x\rceil <x+1$. 
Then we put 
$$
\lceil D\rceil =\sum _{i=1}^l \lceil d_i \rceil D_i 
\quad \text{and}\quad \lfloor D\rfloor =-\lceil -D\rceil. 
$$
\end{defn}

\begin{defn}[{Non-lc ideals and non-lc loci, see 
\cite{fujino-non-lc} and \cite[Section 7]{fujino-fund}}]\label{f-def2.4}
Let $(X, \Delta)$ be a normal pair such that 
$\Delta$ is effective and let 
$f\colon Y\to X$ be a resolution of singularities 
with 
$$
K_Y+\Delta_Y=f^*(K_X+\Delta)
$$ 
such that $\Supp\!\Delta_Y$ is a simple normal crossing 
divisor on $Y$. 
We put 
\begin{equation*}
\begin{split}
\mathcal J_{\NLC}(X, \Delta)&:=f_*\mathcal O_Y
(-\lfloor \Delta_Y\rfloor+\Delta^{=1}_Y)
\\ &=f_*\mathcal O_Y (\lceil -(\Delta^{<1}_Y)\rceil 
-\lfloor \Delta^{>1}_Y\rfloor)
\end{split}
\end{equation*}
and call it the {\em{non-lc ideal sheaf}} associated to 
the pair $(X, \Delta)$. 
We can check that $\mathcal J_{\NLC}(X, \Delta)$ is 
a well-defined ideal sheaf on $X$. 
The closed subscheme $\Nlc(X, \Delta)$ 
defined by $\mathcal J_{\NLC}(X, \Delta)$ is 
called the {\em{non-lc locus}} of $(X, \Delta)$. 
Note that $(X, \Delta)$ is log canonical 
if and only if $\mathcal J_{\NLC}(X, \Delta)=\mathcal 
O_X$. 
\end{defn}

\begin{defn}[$\sim _{\mathbb R}$ and $\equiv$]\label{f-def2.5}
Let $B_1$ and $B_2$ be $\mathbb R$-Cartier 
divisors on a scheme $X$. 
Then $B_1\sim _{\mathbb R} B_2$ means that 
$B_1$ is {\em{$\mathbb R$-linearly equivalent}} 
to $B_2$, that is, 
$B_1-B_2$ is a finite $\mathbb R$-linear combination 
of principal Cartier divisors. 
Let $f\colon X\to Y$ be a proper morphism between schemes. 
Then $B_1\equiv_YB_2$ means 
that $B_1$ is {\em{relatively numerically equivalent}} to 
$B_2$ {\em{over}} $Y$. 
When $Y$ is a point, 
we simply write $B_1\equiv B_2$ to denote 
$B_1\equiv _YB_2$ and say that 
$B_1$ is {\em{numerically equivalent}} to $B_2$. 
\end{defn}

\subsection{Fujita's $\Delta$-genera}\label{f-subsec2.2}
Let us quickly explain Fujita's theory of $\Delta$-genera, which 
will play a crucial role in this paper. We start with the definition of base 
loci. 

\begin{defn}[Base loci]\label{f-def2.6}
Let $f\colon X\to Y$ be a proper morphism between schemes and let 
$L$ be a Cartier divisor on $X$. 
Then $\Bs_f\!|L|$ denotes the support of 
$$\Coker\left(f^*f_*\mathcal O_X(L)\to \mathcal O_X(L)\right)$$ 
and 
is called the {\em{relative base locus}} of 
$|L|$. 
If $Y$ is a point, then 
we simply write $\Bs\!|L|$ to denote $\Bs_f\!|L|$. 
We can define $\Bs_f\!|\mathcal L|$ and $\Bs\!|\mathcal L|$ for 
every line bundle $\mathcal L$ on $X$ in the same way. 
\end{defn}

Let us recall the definition of Fujita's $\Delta$-genera. 
In this paper, we define $\Delta(V, L)$ only when $L$ is ample for simplicity. 
For the general case, see Fujita's original definition in \cite{fujita1}. 

\begin{defn}[{Fujita's $\Delta$-genera, see \cite[Definition 1.4]{fujita1}}]
\label{f-def2.7}
Let $V$ be a projective variety 
and let $L$ be an ample Cartier divisor on $V$. 
Then the {\em{$\Delta$-genus}} of $(V, L)$ is defined to be 
$$
\Delta(V, L)=\dim V +L^{\dim V}-\dim _{\mathbb C} H^0(V, 
\mathcal O_V(L)). 
$$ 
We can define $\Delta(V, \mathcal L)$ for every ample line bundle 
$\mathcal L$ in the same way. 
\end{defn}

The following famous theorem by Takao Fujita is one of the 
main ingredients of this paper. 
We recommend the interested reader to see 
Fujita's original statement (see \cite[Theorem 1.9]{fujita1}), 
which is more general than Theorem \ref{f-thm2.8}. 

\begin{thm}[{Fujita, see \cite[Theorem 1.9]{fujita1}}]\label{f-thm2.8}
Let $V$ be a projective variety and let 
$L$ be an ample Cartier divisor on $V$. 
Then the following inequality 
$$
\dim \Bs\!|L| <\Delta(V, L)
$$
holds, where $\dim \emptyset$ is defined to be $-\infty$. 
In particular, if $\Delta(V, L)=0$, 
then the complete linear system 
$|L|$ is basepoint-free. Of course, the same statement holds for 
ample line bundles $\mathcal L$. 
\end{thm}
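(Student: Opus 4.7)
The plan is to prove Theorem \ref{f-thm2.8} by induction on $n:=\dim V$, carrying along the monotonicity inequality $\Delta(D,L|_D)\leq \Delta(V,L)$ for a general hyperplane section $D\in|L|$. The base case $n=0$ is immediate since $\Delta(V,L)=0$ and $\Bs|L|=\emptyset$, whose dimension is $-\infty$ by convention.

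For the inductive step with $n\geq 1$, I would first dispose of the degenerate case $h^0(V,L)=0$: by convention $\Bs|L|=V$ with $\dim \Bs|L|=n$, whereas $\Delta(V,L)=n+L^n\geq n+1$ because $L$ is an ample Cartier divisor and $L^n$ is a positive integer. In the main case $h^0(V,L)\geq 1$, pick a nonzero section $s\in H^0(V,L)$ with zero divisor $D=(s)_0\in|L|$. The short exact sequence
\[
0\to \mathcal{O}_V \to \mathcal{O}_V(L)\to \mathcal{O}_D(L)\to 0
\]
together with $H^0(V,\mathcal{O}_V)=\mathbb{C}$ gives $h^0(D,L|_D)\geq h^0(V,L)-1$. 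Combined with $(L|_D)^{n-1}=L^n$ and $\dim D=n-1$, this yields the monotonicity inequality $\Delta(D,L|_D)\leq \Delta(V,L)$. By the very definition of the base locus, $\Bs|L|\subset D$, and the induction would then bound $\dim \Bs|L|$ by $\Delta(D,L|_D)$, hence by $\Delta(V,L)$.

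The main obstacle is twofold. First, $D$ is in general neither irreducible nor reduced; for instance, $|L|$ may carry a positive-dimensional fixed part, in which case every member of $|L|$ is reducible. The remedy is to extend Fujita's definition of $\Delta$-genus to polarized pairs $(V,L)$ with $V$ a projective scheme, preserving the formula $\dim V+L^{\dim V}-h^0(V,L)$, and to verify the monotonicity inequality in this wider generality. Second, and more subtly, $\Bs|L|$ coincides with the base locus on $D$ of the \emph{restricted} linear subsystem $\Lambda_D:=\mathrm{image}\bigl(H^0(V,L)\to H^0(D,L|_D)\bigr)$, which may be strictly smaller than $|L|_D|$; so the inductive bound cannot be phrased solely for complete linear systems on $D$, but must be set up for arbitrary linear subsystems. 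This is precisely the generality in which Fujita actually states and proves his theorem in \cite[Theorem 1.9]{fujita1}, and handling this bookkeeping cleanly is where the real work lies.
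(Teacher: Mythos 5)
First, a remark on what you are being compared against: the paper does not prove Theorem \ref{f-thm2.8} at all --- it is imported verbatim from Fujita \cite[Theorem 1.9]{fujita1} --- so the only benchmark is Fujita's own argument. Your skeleton is indeed his: induct on $n=\dim V$ along the ladder $\Bs\!|L|\subset D\in |L|$, use the exact sequence to get $h^0(D,L|_D)\geq h^0(V,L)-1$ together with $(L|_D)^{n-1}=L^n$, and reduce to a base-locus bound on $D$. You also correctly isolate the two sore points: members of $|L|$ need not be irreducible or reduced, and the relevant system on $D$ is the restricted subsystem $\Lambda_D$ rather than $|L|_D|$ (for which, note, the ``monotonicity'' you need is the trivial equality $\dim D+(L|_D)^{n-1}-\dim\Lambda_D=\Delta(V,L)$; the inequality with $h^0(D,L|_D)$ is not what the induction consumes). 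The base case and the case $h^0(V,L)=0$ are fine.

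The genuine gap is that the remedy you propose --- extend $\Delta$ to arbitrary projective schemes and arbitrary linear subsystems, keeping the formula $\dim W+L^{\dim W}-\dim\Lambda$, and run the same induction --- does not close, because the statement in that generality is false. Take $W=C_1\sqcup C_2$ a disjoint union of two copies of $\mathbb P^1$, let $\mathcal L$ have degree $1$ on $C_1$ and degree $d$ on $C_2$, and let $\Lambda=0\oplus H^0(C_2,\mathcal O_{C_2}(d))$. Then $\dim_{\mathbb C}\Lambda=d+1$ and $\mathcal L\cdot W=d+1$, so $\dim W+\mathcal L\cdot W-\dim_{\mathbb C}\Lambda=1$, while $\Bs\Lambda=C_1$ has dimension $1$: the strict inequality fails. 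So the ``bookkeeping'' you defer is in fact the mathematical substance: one must find an induction hypothesis that is true for the schemes the ladder actually produces (connectedness of ample divisors when $\dim\geq 2$, and a separate treatment of the components on which $\Lambda_D$ vanishes identically, i.e.\ of fixed components of $|L|$) and then check that this restricted class is stable under the descent --- that is where Fujita's proof lives. As written, your proposal ends by deferring precisely this to \cite[Theorem 1.9]{fujita1}, which is no more than what the paper itself does by citing it.
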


\subsection{Quasi-log schemes}\label{f-subsec2.3}
The notion of quasi-log schemes was first introduced by 
Florin Ambro in order to 
establish the cone and contraction theorem for 
$(X, \Delta)$, where 
$X$ is a normal variety and $\Delta$ is an 
effective $\mathbb R$-divisor 
on $X$ such that $K_X+\Delta$ is $\mathbb R$-Cartier. 
Here we use the formulation in \cite[Chapter 6]
{fujino-foundations}, which is slightly different from 
Ambro's original one. 
We recommend the interested reader to see \cite[Appendix A]{fujino-pull} 
for the difference between our definition of quasi-log schemes 
and Ambro's one. 

\medskip 

In order to define quasi-log schemes, we need the notion of 
globally embedded simple normal crossing pairs. 

\begin{defn}[{Globally embedded simple normal crossing 
pairs, see \cite[Definition 6.2.1]{fujino-foundations}}]\label{f-def2.9} 
Let $Y$ be a simple normal crossing divisor 
on a smooth 
variety $M$ and let $D$ be an $\mathbb R$-divisor 
on $M$ such that 
$\Supp (D+Y)$ is a simple normal crossing divisor on $M$ and that 
$D$ and $Y$ have no common irreducible components. 
We put $B_Y=D|_Y$ and consider the pair $(Y, B_Y)$. 
We call $(Y, B_Y)$ a {\em{globally embedded simple normal 
crossing pair}} and $M$ the {\em{ambient space}} 
of $(Y, B_Y)$. A {\em{stratum}} of $(Y, B_Y)$ is a log canonical  
center of $(M, Y+D)$ that is contained in $Y$. 
\end{defn}

Let us recall the definition of quasi-log schemes. 

\begin{defn}[{Quasi-log 
schemes, see \cite[Definition 6.2.2]{fujino-foundations}}]\label{f-def2.10}
A {\em{quasi-log scheme}} is a scheme $X$ endowed with an 
$\mathbb R$-Cartier divisor 
(or $\mathbb R$-line bundle) 
$\omega$ on $X$, a closed subscheme 
$X_{-\infty}\subsetneq X$, and a finite collection $\{C\}$ of reduced 
and irreducible subschemes of $X$ such that there is a 
proper morphism $f\colon (Y, B_Y)\to X$ from a globally 
embedded simple 
normal crossing pair satisfying the following properties: 
\begin{itemize}
\item[(1)] $f^*\omega\sim_{\mathbb R}K_Y+B_Y$. 
\item[(2)] The natural map 
$\mathcal O_X
\to f_*\mathcal O_Y(\lceil -(B_Y^{<1})\rceil)$ 
induces an isomorphism 
$$
\mathcal I_{X_{-\infty}}\overset{\simeq}{\longrightarrow} 
f_*\mathcal O_Y(\lceil 
-(B_Y^{<1})\rceil-\lfloor B_Y^{>1}\rfloor),  
$$ 
where $\mathcal I_{X_{-\infty}}$ is the defining ideal sheaf of 
$X_{-\infty}$. 
\item[(3)] The collection of reduced and irreducible subschemes 
$\{C\}$ coincides with the images 
of the strata of $(Y, B_Y)$ that are not included in $X_{-\infty}$. 
\end{itemize}
We simply write $[X, \omega]$ to denote 
the above data 
$$
\bigl(X, \omega, f\colon (Y, B_Y)\to X\bigr)
$$ 
if there is no risk of confusion. 
The reduced and irreducible subschemes $C$ 
are called the {\em{qlc strata}} of $[X, \omega]$, 
$X_{-\infty}$ is called the {\em{non-qlc locus}} 
of $[X, \omega]$, and $f\colon (Y, B_Y)\to X$ is 
called a {\em{quasi-log resolution}} 
of $[X, \omega]$. 
We sometimes use $\Nqlc(X, 
\omega)$ to denote 
$X_{-\infty}$. 
If a qlc stratum $C$ of $[X, \omega]$ is not an 
irreducible component of $X$, then 
it is called a {\em{qlc center}} of $[X, \omega]$. 
\end{defn}

\begin{rem}\label{f-rem2.11} 
By restricting the isomorphism 
$$
\mathcal I_{X_{-\infty}}\overset{\simeq}{\longrightarrow} 
f_*\mathcal O_Y(\lceil 
-(B_Y^{<1})\rceil-\lfloor B_Y^{>1}\rfloor)
$$ in Definition \ref{f-def2.10} 
to the Zariski open set $U=X\setminus X_{-\infty}$, we have 
$$
\mathcal O_U\overset{\simeq}{\longrightarrow} 
f_*\mathcal O_{f^{-1}(U)}(\lceil 
-(B_Y^{<1})\rceil). 
$$ 
This implies that 
$$
\mathcal O_U\overset{\simeq}{\longrightarrow} 
f_*\mathcal O_{f^{-1}(U)} 
$$ holds 
since $\lceil 
-(B_Y^{<1})\rceil$ is effective. 
Hence, $f\colon f^{-1}(U)\to U$ is surjective and 
has connected fibers. 
Note that a qlc stratum $C$ of $[X, \omega]$ is 
the image of some stratum of 
$(Y, B_Y)$ that is not included in $X_{-\infty}$. 
Therefore, $X$ is the union of $\{C\}$ and $X_{-\infty}$. 
In particular, any irreducible component of $X$ that is 
not included in $X_{-\infty}$ is a qlc stratum of 
$[X, \omega]$. 
\end{rem}

\begin{defn}[{Quasi-log canonical 
pairs, see \cite[Definition 6.2.9]{fujino-foundations}}]
\label{f-def2.12}
Let 
$$
\left(X, \omega, f\colon (Y, B_Y)\to X\right)
$$ 
be a quasi-log scheme. 
If $X_{-\infty}=\emptyset$, then 
it is called a {\em{quasi-log canonical pair}}. 
\end{defn}

The most important result in the theory of quasi-log scheme is 
adjunction and the following vanishing theorem. 
We will repeatedly use Theorem \ref{f-thm2.13} in this paper. 
The proof of Theorem \ref{f-thm2.13} in \cite{fujino-foundations} 
heavily depends on the theory of mixed Hodge structures on 
cohomology with compact support (see \cite[Chapter 5]{fujino-foundations}). 

\begin{thm}[{see \cite[Theorem 6.3.5]{fujino-foundations}}]\label{f-thm2.13}
Let $[X, \omega]$ be a quasi-log scheme and let $X'$ be 
the union of $X_{-\infty}$ with a {\em{(}}possibly empty{\em{)}} 
union of some 
qlc strata of $[X, \omega]$. Then we have the following 
properties.
\begin{itemize}
\item[(i)] {\em{(Adjunction).}} Assume that $X'\ne X_{-\infty}$. 
Then $X'$ is a quasi-log scheme with $\omega'=\omega|_{X'}$ 
and $X'_{-\infty}=X_{-\infty}$. 
Moreover, the qlc strata of $[X', \omega']$ 
are exactly the qlc strata of $[X, \omega]$ that are included in $X'$.
\item[(ii)] {\em{(Vanishing theorem).}} Assume 
that $\pi\colon X\to S$ is a proper morphism 
between schemes. Let $L$ be a Cartier divisor on $X$ 
such that $L-\omega$ is ample over $S$ with respect to 
$[X, \omega]$. Then $R^i\pi_*(\mathcal I_{X'}\otimes 
\mathcal O_X(L))=0$ for every 
$i>0$, where $\mathcal I_{X'}$ 
is the defining ideal sheaf of $X'$ on $X$.
\end{itemize}
\end{thm}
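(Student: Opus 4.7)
The plan is to prove both parts by first refining the given quasi-log resolution $f\colon (Y,B_Y)\to X$ so that the preimage of $X'$ becomes a transparent union of irreducible components of $\lfloor B_Y^{=1}\rfloor$, and then to deduce (ii) from a vanishing theorem for globally embedded simple normal crossing pairs applied to this refined set-up.

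For (i), I would first take an auxiliary birational modification $Y'\to Y$, embeddable in a smooth ambient space with all snc conditions preserved, so that every qlc stratum of $[X,\omega]$ contained in $X'$ is the image of a union of irreducible components of $\lfloor B_{Y'}^{=1}\rfloor$. Replacing $f$ by this new resolution, let $Y_1$ denote the union of those components of $\lfloor B_Y^{=1}\rfloor$ whose images lie in $X'$ but not in $X_{-\infty}$, and put $B_{Y_1}:=(B_Y-Y_1)|_{Y_1}$ via usual divisorial adjunction, so that $K_{Y_1}+B_{Y_1}\sim_{\mathbb R}(f|_{Y_1})^{\ast}(\omega|_{X'})$. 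Writing $N:=\lceil -(B_Y^{<1})\rceil-\lfloor B_Y^{>1}\rfloor$, the short exact sequence
\[
0\to \mathcal O_Y(N-Y_1)\to \mathcal O_Y(N)\to \mathcal O_{Y_1}(N|_{Y_1})\to 0,
\]
together with the isomorphism of Definition \ref{f-def2.10}(2) for $[X,\omega]$ and the vanishing of $R^{1}f_{\ast}\mathcal O_Y(N-Y_1)$ (a special case of the snc vanishing invoked below), produces the defining isomorphism of the quasi-log structure on $X'$ with non-qlc locus $X_{-\infty}$ and quasi-log resolution $f|_{Y_1}\colon (Y_1,B_{Y_1})\to X'$. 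The strata of $(Y_1,B_{Y_1})$ are exactly the strata of $(Y,B_Y)$ contained in $Y_1$, giving the claimed description of the qlc strata of $[X',\omega']$.

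For (ii), the same short exact sequence tensored with $f^{\ast}\mathcal O_X(L)$ yields the crucial identification
\[
\mathcal I_{X'}\otimes \mathcal O_X(L)\simeq f_{\ast}\mathcal O_Y(A),\qquad A:=N-Y_1+f^{\ast}L,
\]
after which the Leray spectral sequence reduces the vanishing of $R^{i}\pi_{\ast}(\mathcal I_{X'}\otimes \mathcal O_X(L))$ for $i>0$ to the simultaneous statements
\[
R^{i}f_{\ast}\mathcal O_Y(A)=0\quad\text{and}\quad R^{i}(\pi\circ f)_{\ast}\mathcal O_Y(A)=0\qquad (i>0).
\]
Using $K_Y+B_Y\sim_{\mathbb R}f^{\ast}\omega$, the divisor $A-K_Y-(B_Y^{=1}-Y_1)$ differs from $f^{\ast}(L-\omega)$ by an $\mathbb R$-divisor whose positive and negative parts are supported on the snc boundary and round to zero in the appropriate sense; since $L-\omega$ is $\pi$-ample, this is exactly the shape required by the vanishing theorem for globally embedded snc pairs, applied once to the morphism $f$ and once to the composition $\pi\circ f$.

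The real obstacle is that snc vanishing theorem itself. It is not a Kodaira-type consequence: its proof rests on the theory of mixed Hodge structures on the cohomology with compact support of a reducible simple normal crossing variety and on the $E_1$-degeneration of the associated Hodge-to-de-Rham spectral sequence, as developed in \cite[Chapter 5]{fujino-foundations}. A secondary technical difficulty lies in the combinatorial bookkeeping behind the modification producing $Y_1$: one must simultaneously separate the components of $\lfloor B_Y^{=1}\rfloor$ dominating qlc strata in $X'\setminus X_{-\infty}$ from those dominating $X_{-\infty}$, while preserving the defining isomorphism of the quasi-log structure, and particular care is needed with the $B_Y^{>1}$-part, which encodes $X_{-\infty}$ and must restrict correctly to $Y_1$ so that $X'_{-\infty}=X_{-\infty}$ emerges on the nose.
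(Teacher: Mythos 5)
There is a genuine gap, and it sits exactly at the step your whole construction hinges on. You claim that the exactness of the pushed-forward sequence (equivalently, the vanishing of the connecting homomorphism out of $f_{*}\mathcal O_{Y_1}(N|_{Y_1})$) follows from $R^{1}f_{*}\mathcal O_Y(N-Y_1)=0$, calling this ``a special case of the snc vanishing.'' It is not: writing $A=N-Y_1$ (or $A=N-Y_1+f^{*}L$), the divisor $A-(K_Y+\{B_Y\}+B_Y^{=1}-Y_1)$ is $\mathbb R$-linearly equivalent to $f^{*}(-\omega)$ (resp.\ $f^{*}(L-\omega)$), i.e.\ it is only \emph{relatively numerically trivial over $X$}, never $f$-ample, so no vanishing theorem applies to $R^{i}f_{*}$ and these sheaves are nonzero in general (already for a resolution of a non-rational singularity). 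What the paper's argument actually uses at this point is the Koll\'ar-type \emph{torsion-freeness} from \cite[Chapter 5]{fujino-foundations}: every associated prime of $R^{1}f''_{*}\mathcal O_{Y''}(\cdots)$ is the generic point of the $f''$-image of a stratum, while the image of the connecting map $\delta$ is supported on $f(Y'\cap Y'')$, which contains no such generic point; hence $\delta=0$. The same confusion propagates into your part (ii): reducing $R^{i}\pi_{*}(\mathcal I_{X'}\otimes\mathcal O_X(L))=0$ to ``$R^{i}f_{*}\mathcal O_Y(A)=0$ and $R^{i}(\pi\circ f)_{*}\mathcal O_Y(A)=0$'' asks for something false. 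The correct statement supplied by the theory is $R^{p}\pi_{*}\bigl(R^{q}f_{*}\mathcal O_Y(A)\bigr)=0$ for all $p>0$ and all $q$ (because $A-(K_Y+\cdots)\sim_{\mathbb R}f^{*}(L-\omega)$ with $L-\omega$ $\pi$-ample), and the case $q=0$ is exactly what you need once you identify $\mathcal I_{X'}\otimes\mathcal O_X(L)$ with a zeroth direct image; the higher $R^{q}f_{*}$ never have to vanish.

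A secondary, fixable issue is your normalization of the preimage of $X'$. You take $Y_1$ to be a union of components of $\lfloor B_Y^{=1}\rfloor$ and restrict to it as a divisor on $Y$; but qlc strata of $[X,\omega]$ contained in $X'$ may be images of irreducible \emph{components of $Y$} (for instance when $X'$ contains an irreducible component of $X$), and these are not captured by $\lfloor B_Y^{=1}\rfloor$. The paper's sketch instead applies \cite[Proposition 6.3.1]{fujino-foundations} so that the union $Y'$ of \emph{all strata mapped into $X'$} becomes a union of irreducible components of $Y$, sets $Y''=Y-Y'$, and works with the sequence induced by $0\to\mathcal O_{Y''}(-Y'|_{Y''})\to\mathcal O_Y\to\mathcal O_{Y'}\to 0$; the ideal $\mathcal I_{X'}$ is then $f''_{*}\mathcal O_{Y''}(\lceil -(B^{<1}_{Y''})\rceil-\lfloor B^{>1}_{Y''}\rfloor-Y'|_{Y''})$ and the identification $X'_{-\infty}=X_{-\infty}$ falls out of the resulting commutative diagram. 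Your instinct that the deep input is the mixed Hodge theory of \cite[Chapter 5]{fujino-foundations} is right, but the consequence you need for adjunction is the torsion-freeness/injectivity theorem, not a relative vanishing over $X$.
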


We quickly explain the main idea of the proof of Theorem \ref{f-thm2.13} (i) 
for the reader's convenience. 
For the details, see \cite[Theorem 6.3.5]{fujino-foundations}. 

\begin{proof}[Idea of Proof of Theorem \ref{f-thm2.13} (i)] 
By definition, $X'$ is the union of $X_{-\infty}$ with 
a union of some qlc strata of $[X, \omega]$ set theoretically. 
We assume that $X'\ne X_{-\infty}$ holds. 
By \cite[Proposition 6.3.1]{fujino-foundations}, 
we may assume that 
the union of all strata of $(Y, B_Y)$ mapped to $X'$ by $f$, which 
is denoted by $Y'$, is a union of some irreducible components of $Y$. 
We put $Y''=Y-Y'$, 
$K_{Y''}+B_{Y''}=(K_Y+B_Y)|_{Y''}$, and 
$K_{Y'}+B_{Y'}=(K_Y+B_Y)|_{Y'}$. We set $f''=f|_{Y''}$ and 
$f'=f|_{Y'}$. Then we claim that 
$$
\left(X', \omega', f'\colon (Y', B_{Y'})\to X'\right)
$$ 
becomes a quasi-log scheme satisfying the desired properties. 
Let us consider the 
following short exact sequence:   
\begin{equation*}
\begin{split}
0&\to \mathcal O_{Y''}(\lceil -(B^{<1}_{Y''})\rceil -\lfloor 
B^{>1}_{Y''}\rfloor-Y'|_{Y''})\to 
\mathcal O_Y(\lceil -(B^{<1}_Y)\rceil -\lfloor 
B^{>1}_Y\rfloor)
\\ &\to 
\mathcal O_{Y'}(\lceil -(B^{<1}_{Y'})\rceil -\lfloor 
B^{>1}_{Y'}\rfloor)\to 0, 
\end{split} 
\end{equation*} 
which is induced by 
$$
0\to \mathcal O_{Y''}(-Y'|_{Y''})\to \mathcal O_Y\to 
\mathcal O_{Y'}\to 0. 
$$
We take the associated long exact sequence. 
Then we can check that the connecting homomorphism 
$$
\delta\colon  f'_*\mathcal O_{Y'}(\lceil -(B^{<1}_{Y'})\rceil -\lfloor 
B^{>1}_{Y'}\rfloor)\to 
R^1f''_* \mathcal O_{Y''}(\lceil -(B^{<1}_{Y''})\rceil -\lfloor 
B^{>1}_{Y''}\rfloor-Y'|_{Y''})
$$ 
is zero by using a generalization of Koll\'ar's torsion-freeness based on the 
theory of mixed Hodge structures on cohomology with compact support 
(see \cite[Chapter 5]{fujino-foundations}). 
We put 
$$
\mathcal I_{X'}:=f''_* \mathcal O_{Y''}(\lceil -(B^{<1}_{Y''})\rceil -\lfloor 
B^{>1}_{Y''}\rfloor-Y'|_{Y''}), 
$$ which is an ideal sheaf on $X$ since 
$\mathcal I_{X'}\subset \mathcal I_{X_{-\infty}}$, 
and define a scheme structure on $X'$ by $\mathcal I_{X'}$. 
Then we obtain the following big commutative diagram:  
$$
\xymatrix{
& 0 \ar[d]& 0\ar[d] & &\\ 
0\ar[r]& f''_* \mathcal O_{Y''}(\lceil -(B^{<1}_{Y''})\rceil -\lfloor 
B^{>1}_{Y''}\rfloor-Y'|_{Y''})\ar[r]^-{=}\ar[d] &\mathcal I_{X'}\ar[d]&&
\\ 
0\ar[r]&f_*\mathcal O_Y(\lceil -(B^{<1}_Y)\rceil -\lfloor 
B^{>1}_Y\rfloor)=\mathcal I_{X_{-\infty}}\ar[r]\ar[d]
&\mathcal O_X\ar[d]\ar[r]&\mathcal O_{X_{-\infty}}\ar@{=}[d]
\ar[r]& 0\\ 
0\ar[r]&f'_* \mathcal O_{Y'}(\lceil -(B^{<1}_{Y'})\rceil -\lfloor 
B^{>1}_{Y'}\rfloor)=\mathcal I_{X'_{-\infty}}
\ar[d]\ar[r]&\mathcal O_{X'}\ar[d]\ar[r]&\mathcal O_{X'_{-\infty}}\ar[r]&0\\ 
&0&0&&
}
$$ 
by the above arguments. More precisely, 
by the above big commutative diagram, 
$$
\mathcal I_{X'_{-\infty}}=f'_* \mathcal O_{Y'}(\lceil -(B^{<1}_{Y'})\rceil -\lfloor 
B^{>1}_{Y'}\rfloor)
$$ 
is an ideal sheaf on $X'$ such that $\mathcal O_X/\mathcal I_{X_{-\infty}}
=\mathcal O_{X'}/\mathcal I_{X'_{-\infty}}$. 
Thus we obtain that 
$$
\left(X', \omega', f'\colon (Y', B_{Y'})\to X'\right)
$$ 
is a quasi-log scheme satisfying the desired properties. 
\end{proof}

The following example is very important. It 
shows that we can treat log canonical 
pairs as quasi-log canonical pairs. 

\begin{ex}[{\cite[6.4.1]{fujino-foundations}}]\label{f-ex2.14}
Let $(X, \Delta)$ be a normal pair such that 
$\Delta$ is effective. 
Let $f\colon Y\to X$ be a resolution of singularities such that 
$$
K_Y+B_Y=f^*(K_X+\Delta)
$$ 
and that $\Supp\!B_Y$ is a simple normal crossing divisor on $Y$. 
We put $\omega=K_X+\Delta$. 
Then $K_Y+B_Y\sim _{\mathbb R}f^*\omega$ holds. 
Since $\Delta$ is effective, $\lceil -(B^{<1}_Y)\rceil$ is effective 
and $f$-exceptional. 
Therefore, 
the natural map 
$$
\mathcal O_X\to f_*\mathcal O_Y(\lceil -(B^{<1}_Y)\rceil)
$$ 
is an isomorphism. 
We put 
$$
\mathcal I_{X_{-\infty}}:=\mathcal J_{\NLC}(X, \Delta)
=f_*\mathcal O_Y(\lceil -(B^{<1}_Y)\rceil-\lfloor B^{>1}_Y\rfloor),  
$$ 
where $\mathcal J_{\NLC}(X, \Delta)$ is the non-lc 
ideal sheaf associated to $(X, \Delta)$ in Definition \ref{f-def2.4}. 
We put $M=Y\times \mathbb C$ and $D=B_Y\times 
\mathbb C$. 
Then $(Y, B_Y)\simeq (Y\times \{0\}, B_Y\times 
\{0\})$ is a globally embedded simple normal crossing pair. 
Thus 
$$
\left(X, \omega, f\colon(Y, B_Y)\to X\right)
$$
becomes a quasi-log scheme. 
By construction, $(X, \Delta)$ is log canonical 
if and only if $[X, \omega]$ is quasi-log canonical. 
We note that $C$ is a log canonical 
center of $(X, \Delta)$ if and only if $C$ is a qlc center 
of $[X, \omega]$. We also note that 
$X$ itself is a qlc stratum of $[X, \omega]$. 

Let $X'$ be the union of $X_{-\infty}$ with 
a union of some qlc centers of $[X, \omega]$. 
If $X'\ne X_{-\infty}$, then $[X', \omega|_{X'}]$ naturally 
becomes a quasi-log scheme by adjunction 
(see Theorem \ref{f-thm2.13} (i) and \cite[Theorem 6.3.5 (i)]{fujino-foundations}). 
When $X_{-\infty}=\emptyset$, equivalently, $(X, \Delta)$ 
is log canonical, we see that 
$[X', \omega|_{X'}]$ is quasi-log canonical. 
By construction, $X'$ is not necessarily equidimensional and 
is a highly singular 
reducible and reduced scheme. 
\end{ex}

For the basic properties of quasi-log schemes, 
see \cite[Chapter 6]{fujino-foundations}. 

\section{Three lemmas for quasi-log schemes}
\label{f-sec3}

In this section, 
we will explain three useful lemmas for quasi-log schemes for the 
reader's convenience. 
They are essentially contained in \cite[Chapter 6]{fujino-foundations} 
or easily follow from the arguments in \cite[Chapter 6]{fujino-foundations}. 

\medskip 

Let us start with the following easy lemma, which is 
almost obvious by definition. 

\begin{lem}\label{f-lem3.1}
Let $$\left(X, \omega, f\colon  (Y, B_Y)\to X\right)$$ be 
a quasi-log canonical 
pair and let 
$B$ be an effective 
$\mathbb R$-Cartier divisor on $X$. 
Assume that 
$(Y, B_Y+f^*B)$ is a globally embedded simple normal 
crossing pair. 
Then 
$$
\left(X, \omega+B, f\colon  (Y, B_Y+f^*B)\to X\right)
$$ 
is a quasi-log scheme. 
Of course, $[X, \omega+B]$ is quasi-log canonical 
if and only if $B_Y+f^*B$ is a subboundary 
$\mathbb R$-divisor on $Y$, that is, 
$(B_Y+f^*B)^{>1}=0$. 
\end{lem}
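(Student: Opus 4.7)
The plan is a direct verification of the three conditions of Definition~\ref{f-def2.10} for the proposed data $\bigl(X,\omega+B,f\colon(Y,B_Y+f^*B)\to X\bigr)$. Condition~(1) is immediate since $f^*(\omega+B)=f^*\omega+f^*B\sim_{\mathbb R}K_Y+B_Y+f^*B$. Condition~(3) will be automatic once the non-qlc locus $X_{-\infty}'$ is identified, by declaring the family $\{C\}$ for the new data to be the images of strata of $(Y,B_Y+f^*B)$ not contained in $X_{-\infty}'$.

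The substantive work is condition~(2). I would propose
\[
\mathcal I':=f_*\mathcal O_Y\bigl(\lceil -(B_Y+f^*B)^{<1}\rceil-\lfloor(B_Y+f^*B)^{>1}\rfloor\bigr)
\]
as the defining ideal sheaf of $X_{-\infty}'$. The preparatory claim is that $\mathcal I'\hookrightarrow\mathcal O_X$; one checks this by a case-by-case comparison at each prime divisor $E$ of $Y$. Writing $b_E$, $c_E\geq 0$, $a_E:=b_E+c_E$ for the coefficients of $B_Y$, $f^*B$, $B_Y+f^*B$ at $E$, the effectivity $c_E\geq 0$ gives
\[
\lceil -(B_Y+f^*B)^{<1}\rceil-\lfloor(B_Y+f^*B)^{>1}\rfloor\;\leq\;\lceil -B_Y^{<1}\rceil-\lfloor B_Y^{>1}\rfloor.
\]
Pushing forward and invoking the quasi-log canonicity of the original data, which supplies $f_*\mathcal O_Y(\lceil -B_Y^{<1}\rceil-\lfloor B_Y^{>1}\rfloor)=\mathcal I_{X_{-\infty}}=\mathcal O_X$, yields $\mathcal I'\hookrightarrow\mathcal O_X$. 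Viewing every relevant sheaf as a subsheaf of $f_*\mathcal K_Y$ (rational functions), the natural map $\mathcal O_X\to f_*\mathcal O_Y(\lceil -(B_Y+f^*B)^{<1}\rceil)$ restricts on $\mathcal I'\subseteq\mathcal O_X$ to the tautological inclusion $\mathcal I'\hookrightarrow f_*\mathcal O_Y(\lceil -(B_Y+f^*B)^{<1}\rceil)$, which provides the isomorphism demanded by condition~(2).

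For the final ``iff'': $[X,\omega+B]$ is quasi-log canonical exactly when $\mathcal I'=\mathcal O_X$. If $(B_Y+f^*B)^{>1}=0$, the $\lfloor\cdot\rfloor$ term disappears and effectivity of $\lceil -(B_Y+f^*B)^{<1}\rceil$ produces the reverse inclusion $\mathcal O_X\hookrightarrow\mathcal I'$, so both inclusions force equality. Conversely, if some $E$ on $Y$ satisfies $a_E>1$, then near any point of $f(E)$ the constant section $1\in\mathcal O_X$ fails the vanishing required along $E$, so $f(E)\subseteq X_{-\infty}'$ and $X_{-\infty}'\neq\emptyset$.

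The only real obstacle is bookkeeping: a handful of subcases must be checked in the coefficient comparison (the delicate ones being $a_E=1$ with $b_E<1$ and $a_E>1$ with $b_E=1$, where the $(\cdot)^{=1}$ piece interacts with the shift by $c_E$), and the compatibility of the various inclusions into $f_*\mathcal K_Y$ must be kept straight. None of this is deep.
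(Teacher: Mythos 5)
Your proposal is correct and follows essentially the same route as the paper: condition (1) is immediate, the new non-qlc ideal is defined as $f_*\mathcal O_Y(\lceil -(B_Y+f^*B)^{<1}\rceil-\lfloor (B_Y+f^*B)^{>1}\rfloor)$, and the key point is the coefficient inequality (the paper uses $0\leq \lceil -(B_Y+f^*B)^{<1}\rceil\leq\lceil -(B_Y^{<1})\rceil$ together with the factorization of $\mathcal O_X\to f_*\mathcal O_Y(\lceil -(B_Y^{<1})\rceil)$ through $f_*\mathcal O_Y$, which is your comparison in slightly different packaging) that exhibits this sheaf as an ideal in $\mathcal O_X$ via the quasi-log canonicity of $[X,\omega]$. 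The final equivalence is handled in the paper simply by construction, so your extra detail there is harmless.
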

\begin{proof}
By definition, $K_Y+B_Y\sim _{\mathbb R} f^*\omega$. 
Therefore, $K_Y+B_Y+f^*B\sim 
_{\mathbb R} f^*(\omega+B)$ obviously holds true. 
Since $[X, \omega]$ is a quasi-log canonical 
pair, the natural map 
$$
\mathcal O_X\to 
f_*\mathcal O_Y(\lceil -(B^{<1}_Y)\rceil)
$$ 
is an isomorphism. 
Since it factors through 
$f_*\mathcal O_Y$, we have 
\begin{equation}\label{f-eq3.1} 
\mathcal O_X\overset{\simeq}{\longrightarrow} 
f_*\mathcal O_Y\overset{\simeq}{\longrightarrow} 
f_*\mathcal O_Y(\lceil -(B^{<1}_Y)\rceil). 
\end{equation}
We note that 
$$
0\leq \lceil -(B_Y+f^*B)^{<1}\rceil 
\leq \lceil -(B^{<1}_Y)\rceil. 
$$ 
Therefore, we obtain 
$$
\mathcal O_X\overset{\simeq}{\longrightarrow} 
f_*\mathcal O_Y\overset{\simeq}{\longrightarrow}  
f_*\mathcal O_Y(\lceil -(B_Y+f^*B)^{<1}\rceil)
\overset{\simeq}{\longrightarrow} 
f_*\mathcal O_Y(\lceil -(B^{<1}_Y)\rceil). 
$$ 
Thus, we get a nonzero coherent ideal sheaf 
$$
\mathcal I_{\Nqlc(X, \omega+B)}:=
f_*\mathcal O_Y(\lceil -(B_Y+f^*B)^{<1}\rceil
-\lfloor (B_Y+f^*B)^{>1}\rfloor), 
$$ 
which defines a closed subscheme $\Nqlc(X, \omega+B)$. 
Let $W$ be a reduced and irreducible 
subscheme of $X$. 
We say that $W$ is a qlc stratum of $[X, \omega+B]$ if 
$W$ is not included in $\Nqlc(X, \omega+B)$ 
and is the $f$-image of some stratum of $(Y, B_Y+f^*B)$. 
Then 
$$
\left(X, \omega+B, f\colon  (Y, B_Y+f^*B)\to X\right)
$$ 
is a quasi-log scheme. 
By construction, $[X, \omega+B]$ is a 
quasi-log canonical pair if and only if $(B_Y+f^*B)^{>1}=0$. 
Note that $\left(X, \omega+B, f\colon (Y, B_Y+f^*B)\to X\right)$ coincides 
with 
$\left(X, \omega, f\colon  (Y, B_Y)\to X\right)$ outside $\Supp\!B$.  
\end{proof}

The next lemma is similar to the previous one. 
However, the proof is not so obvious because we need 
the argument in the proof of adjunction (see Theorem \ref{f-thm2.13} (i)). 

\begin{lem}\label{f-lem3.2}
Let $$\left(X, \omega, f\colon  (Y, B_Y)\to X\right)$$ be 
a quasi-log scheme and let 
$B$ be an effective 
$\mathbb R$-Cartier divisor on $X$. 
Let $X'$ be the union of $\Nqlc(X, \omega)$ and all qlc centers of $[X, \omega]$ 
contained in $\Supp\!B$. 
Assume that 
the union of all strata of $(Y, B_Y)$ mapped to $X'$ by $f$, which is 
denoted by $Y'$, is a union of some irreducible components 
of $Y$. 
We put $Y''=Y-Y'$, $K_{Y''}+B_{Y''}=
(K_Y+B_Y)|_{Y''}$, and $f''=f|_{Y''}$. 
We further assume that 
$$
\left(Y'', B_{Y''}+(f'')^*B\right)
$$ 
is a globally embedded simple normal crossing pair. 
Then 
$$
\left(X, \omega+B, f''\colon  (Y'', B_{Y''}+(f'')^*B)\to X\right)
$$ 
is a quasi-log scheme. 
\end{lem}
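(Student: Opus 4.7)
The plan is to verify the three conditions of Definition~\ref{f-def2.10} for the proposed quasi-log structure
$\bigl(X,\omega+B,\,f''\colon(Y'',B_{Y''}+(f'')^*B)\to X\bigr)$
by adapting the proof of adjunction (Theorem~\ref{f-thm2.13}~(i)). Condition (1), that $(f'')^*(\omega+B)\sim_{\mathbb R} K_{Y''}+B_{Y''}+(f'')^*B$, is immediate upon restricting $f^*\omega\sim_{\mathbb R}K_Y+B_Y$ to the union of irreducible components $Y''$ and adding $(f'')^*B$ to both sides. Condition (3), that the qlc strata are exactly the $f''$-images of strata of $(Y'',B_{Y''}+(f'')^*B)$ not landing in the new non-qlc locus, will follow once condition (2) pins down that locus.

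The heart of the proof is condition (2). Set
\[
\mathcal{J}:=f''_*\mathcal{O}_{Y''}\bigl(\lceil -((B_{Y''}+(f'')^*B)^{<1})\rceil-\lfloor (B_{Y''}+(f'')^*B)^{>1}\rfloor\bigr).
\]
I must show that the natural map $\mathcal{O}_X\to f''_*\mathcal{O}_{Y''}(\lceil -((B_{Y''}+(f'')^*B)^{<1})\rceil)$ induces an isomorphism from the ideal sheaf of the new non-qlc locus onto $\mathcal{J}$. My plan is to mimic the proof of Theorem~\ref{f-thm2.13}~(i): start from the short exact sequence
\[
0\to \mathcal{O}_{Y''}(-Y'|_{Y''})\to \mathcal{O}_Y\to \mathcal{O}_{Y'}\to 0
\]
(valid because $Y'$ is a union of irreducible components of $Y$), twist by a line bundle whose restriction to $Y''$ realizes the rounded divisor above, push forward by $f$, and take the associated long exact sequence. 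The critical step is then to show that the resulting connecting map
$\delta\colon f'_*\mathcal{O}_{Y'}(\cdots)\to R^1f''_*\mathcal{O}_{Y''}(\cdots-Y'|_{Y''})$
vanishes by the generalization of Koll\'ar's torsion-freeness based on mixed Hodge structures on cohomology with compact support (Chapter~5 of \cite{fujino-foundations}); the hypothesis that $(Y'',B_{Y''}+(f'')^*B)$ is globally embedded SNC is exactly what makes this Hodge-theoretic input available. Assembling the resulting big commutative diagram, as in the proof of Theorem~\ref{f-thm2.13}~(i), then identifies $\mathcal{J}$ as the defining ideal of the new non-qlc locus.

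The main obstacle will be that, in contrast with the adjunction setup, the rounded divisor $\lceil -((B_Y+f^*B)^{<1})\rceil-\lfloor(B_Y+f^*B)^{>1}\rfloor$ is not a priori Cartier on all of $Y$, since $(Y,B_Y+f^*B)$ is not assumed to be globally embedded SNC; only its restriction to $Y''$ has this property, thanks to the hypothesis. I would handle this either by choosing a Cartier representative on $Y$ whose restriction to $Y''$ gives the desired divisor, or by performing the twisted pushforward argument intrinsically on $Y''$ using that the above sequence is an exact sequence of $\mathcal{O}_Y$-modules supported suitably. Once this technical point is settled, all three conditions fall into place by the same bookkeeping as in the proof of Theorem~\ref{f-thm2.13}~(i), and condition (3) reduces to noting that strata of $(Y'',B_{Y''}+(f'')^*B)$ mapped to $X'$ already lie in $Y'$ (hence are excluded from $Y''$ by hypothesis), while new strata coming from components of $(f'')^*B$ with coefficient $\geq 1$ land in the new non-qlc locus by construction.
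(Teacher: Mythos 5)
Your verification of condition (1) is fine, and you have correctly identified the proof of adjunction (Theorem \ref{f-thm2.13} (i)) as the relevant model, but the engine of your argument for condition (2) is both unnecessary and, as set up, not executable; the ``main obstacle'' you flag at the end is the real problem, and neither of your proposed fixes resolves it. Your three-term exact sequence requires a twist defined on all of $Y$, hence in particular a meaningful boundary on $Y'$; but $(f')^*B$ is not a well-defined divisor on the components of $Y'$, precisely because those components map into $X'$, i.e.\ into $\Supp\!B$ up to the non-qlc locus --- that is the very reason $Y'$ is discarded. There is no natural line bundle on $Y$ restricting to $\mathcal O_{Y''}(\lceil -(B_{Y''}+(f'')^*B)^{<1}\rceil-\lfloor (B_{Y''}+(f'')^*B)^{>1}\rfloor)$, and even an artificial Cartier extension would not satisfy the hypotheses of the torsion-freeness theorem, which needs the divisor to be adapted to a globally embedded simple normal crossing structure on $Y$ and on $Y'$, not just on $Y''$. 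Working ``intrinsically on $Y''$'' does not help either: the whole issue in condition (2) is to show that $f''_*\mathcal O_{Y''}(\lceil -(B_{Y''}+(f'')^*B)^{<1}\rceil-\lfloor (B_{Y''}+(f'')^*B)^{>1}\rfloor)$ sits inside $\mathcal O_X$ via the natural map, and with no second term to compare against you have no way to see this.

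The paper's proof requires no new Hodge-theoretic input at all. It reuses the \emph{output} of the adjunction proof for the original pair $(Y,B_Y)$, namely the identification $\mathcal I_{X'}=f''_*\mathcal O_{Y''}(\lceil -(B^{<1}_{Y''})\rceil-\lfloor B^{>1}_{Y''}\rfloor-Y'|_{Y''})$ as an ideal sheaf on $X$, and then invokes the coefficientwise ``key inequality''
$$
\lceil -(B_{Y''}+(f'')^*B)^{<1}\rceil-\lfloor (B_{Y''}+(f'')^*B)^{>1}\rfloor\ \leq\ \lceil -(B^{<1}_{Y''})\rceil-\lfloor B^{>1}_{Y''}\rfloor-Y'|_{Y''},
$$
where the extra $-Y'|_{Y''}$ on the right is absorbed because $(f'')^*B$ is positive along $Y'|_{Y''}$ (this is where the hypothesis that $X'$ contains every qlc center inside $\Supp\!B$ is used). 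Pushing forward gives $\mathcal I_{\Nqlc(X,\omega+B)}\subset\mathcal I_{X'}\subset\mathcal O_X$, which is exactly what condition (2) demands; conditions (1) and (3) are then bookkeeping, as you say. This comparison step --- an elementary inequality measured against the already-known ideal $\mathcal I_{X'}$ --- is absent from your proposal and is the actual content of the lemma.
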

\begin{proof}
Since $K_Y+B_Y\sim _{\mathbb R} f^*\omega$, we have 
$K_{Y''}+B_{Y''}\sim _{\mathbb R} (f'')^*\omega$. 
Therefore, $K_{Y''}+B_{Y''}+(f'')^*B\sim 
_{\mathbb R}(f'')^*(\omega+B)$ holds true. 
By the proof of adjunction (see 
Theorem \ref{f-thm2.13} (i) and 
\cite[Theorem 6.3.5 (i)]{fujino-foundations}), 
we have 
$$
\mathcal I_{X'}=f''_*\mathcal O_{X''}(\lceil -(B^{<1}_{Y''})\rceil 
-\lfloor B^{>1}_{Y''}\rfloor-Y'|_{Y''}), 
$$ 
where $\mathcal I_{X'}$ is the defining ideal sheaf of $X'$ on $X$. 
Note that the following key inequality 
$$
\lceil -(B_{Y''}+(f'')^*B)^{<1}\rceil -\lfloor 
(B_{Y''}+(f'')^*B)^{>1}\rfloor 
\leq \lceil -(B^{<1}_{Y''})\rceil -\lfloor 
B^{>1}_{Y''}\rfloor -Y'|_{Y''}
$$
holds. 
Therefore, we put 
$$
\mathcal I_{\Nqlc(X, \omega+B)}:= 
f''_*\mathcal O_{Y''}(\lceil -(B_{Y''}+(f'')^*B)^{<1}\rceil -\lfloor 
(B_{Y''}+(f'')^*B)^{>1}\rfloor) \subset \mathcal I_{X'}\subset 
\mathcal O_X
$$ 
and define a closed subscheme $\Nqlc(X, \omega+B)$ of $X$ 
by $\mathcal I_{\Nqlc(X, \omega+B)}$. 
Then 
$$
\left(X, \omega+B, f''\colon  (Y'', B_{Y''}+(f'')^*B)\to X\right)
$$ 
is a quasi-log scheme. 
Let $W$ be a reduced and irreducible subscheme of $X$. 
As usual, we say that $W$ is a qlc stratum of $[X, \omega+B]$ 
when $W$ is not contained in $\Nqlc(X, \omega+B)$ and is 
the $f''$-image of some stratum of $(Y'', B_{Y''}+(f'')^*B)$. 
By construction, we have $X'\subset \Nqlc(X, \omega+B)$. 
We note that 
$\left(X, \omega+B, f''\colon  (Y'', B_{Y''}+(f'')^*B)\to X\right)$ coincides 
with $\left(X, \omega, f\colon (Y, B_Y)\to X\right)$ outside $\Supp\!B$. 
\end{proof}

The final lemma is easy but very useful. 
We often use it without mentioning 
it explicitly. 

\begin{lem}[Bertini-type theorem]\label{f-lem3.3}
Let $[X, \omega]$ be a quasi-log scheme and 
let $\Lambda$ be a free linear system on $X$. 
If $D$ is a general member of $\Lambda$, then 
$[X, \omega+cD]$ becomes a quasi-log scheme with 
$\Nqlc(X, \omega+cD)=\Nqlc(X, \omega)$ for 
every $0\leq c\leq 1$. 

More precisely, there exists a proper morphism 
$f\colon (Y, B_Y)\to X$ from a globally embedded simple normal 
crossing pair $(Y, B_Y)$ such that 
$(Y, B_Y+f^*D)$ is a globally embedded simple normal crossing 
pair and that 
$$
\left( X, \omega+cD, f\colon  (Y, B_Y+f^*cD)\to X\right)
$$ 
is a quasi-log scheme with $\Nqlc(X, \omega+cD)=\Nqlc(X, \omega)$ for 
every $0\leq c\leq 1$. 

When $c=1$, every irreducible component $D^\dag$ of $D$ is a qlc 
center of $$\left(X, \omega+D, f\colon (Y, B_Y+f^*D)\to X\right). 
$$ 
Therefore, by adjunction, 
$[D', (\omega+D)|_{D'}]$ is a quasi-log scheme, where 
$D'=D^\dag\cup \Nqlc(X, \omega)$. 
\end{lem}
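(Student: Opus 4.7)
The plan is to exhibit a single quasi-log resolution that simultaneously serves for every $c \in [0,1]$ and then verify that the non-qlc ideal is unchanged.

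Starting from any quasi-log resolution $f_0\colon (Y_0, B_{Y_0})\to X$ of $[X,\omega]$ with ambient smooth variety $M_0$, the pulled-back linear system $f_0^*\Lambda$ on $Y_0$ is still free. By Bertini applied to the morphism $Y_0 \to \mathbb{P}^N$ defined by $f_0^*\Lambda$, a general $D \in \Lambda$ has smooth pullback $f_0^*D$. After a suitable sequence of blowups $\mu\colon Y \to Y_0$ with centers over $\Supp B_{Y_0} \cap \Supp f_0^*D$, we can arrange that $B_Y + f^*D$ has simple normal crossing support on $Y$, where $f = f_0\circ\mu$ and $K_Y + B_Y = \mu^*(K_{Y_0}+B_{Y_0})$. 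Setting $M = Y\times\mathbb{C}$ with $Y\hookrightarrow M$ as $Y\times\{0\}$, as in Example~\ref{f-ex2.14}, the pair $(Y, B_Y + f^*D)$ becomes a globally embedded simple normal crossing pair, and $f\colon (Y,B_Y)\to X$ remains a quasi-log resolution of $[X,\omega]$. By further generality of $D$, I may also assume $f^*D$ and $B_Y$ share no common irreducible component.

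The key identity to verify is
\[
\lceil -(B_Y + cf^*D)^{<1}\rceil - \lfloor (B_Y + cf^*D)^{>1}\rfloor = \lceil -B_Y^{<1}\rceil - \lfloor B_Y^{>1}\rfloor
\]
for every $c \in [0,1]$. Since $f^*D$ shares no component with $B_Y$, the components of $f^*D$ enter $B_Y + cf^*D$ with coefficient exactly $c$; when $c < 1$ they contribute $cf^*D$ to the ${<}1$-part and $\lceil -c\rceil = 0$ kills them, while when $c=1$ they move into the ${=}1$-part and vanish from both the ${<}1$- and ${>}1$-parts. Applying $f_*$ yields $\mathcal{I}_{\Nqlc(X,\omega+cD)} = \mathcal{I}_{\Nqlc(X,\omega)}$, so $\Nqlc(X,\omega+cD) = \Nqlc(X,\omega)$. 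Combined with $K_Y+B_Y+cf^*D \sim_{\mathbb{R}} f^*(\omega + cD)$, this confirms that $[X,\omega+cD]$ is a quasi-log scheme with the asserted resolution.

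For the final assertion, take $c=1$. Every irreducible component of $f^*D$ carries coefficient one in $B_Y + f^*D$, hence is a stratum of $(Y, B_Y + f^*D)$. For any irreducible component $D^\dag$ of $D$, surjectivity of $f$ away from $\Nqlc$ (Remark~\ref{f-rem2.11}) forces some irreducible component of $f^*D^\dag$ to dominate $D^\dag$, so $D^\dag$ is the $f$-image of a stratum of $(Y, B_Y+f^*D)$ and is therefore a qlc stratum of $[X,\omega+D]$ (it is a qlc center when not already an irreducible component of $X$). Consequently $X' = D^\dag \cup \Nqlc(X,\omega)$ is the union of the non-qlc locus with a qlc stratum of $[X,\omega+D]$, and adjunction (Theorem~\ref{f-thm2.13}(i)) endows $[D', (\omega+D)|_{D'}]$ with the desired quasi-log structure.

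The main obstacle is the initial geometric setup: arranging through birational modifications that $B_Y + f^*D$ is simple normal crossing without destroying the quasi-log resolution property. Freeness of $\Lambda$ is exactly what makes this possible, because it allows Bertini on the image of $Y_0$ in projective space and guarantees that a general $D$ is transverse to all strata of $B_{Y_0}$. Once this setup is in place, the ideal-sheaf computation and the appeal to adjunction are purely formal.
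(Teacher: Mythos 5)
Your argument follows essentially the same route as the paper's: pull back a general member of the free system, use generality to arrange transversality with $B_Y$, observe that the coefficients of the new components lie in $(0,1]$ so that
$$
\lceil -(B_Y+cf^*D)^{<1}\rceil-\lfloor (B_Y+cf^*D)^{>1}\rfloor=\lceil -(B_Y^{<1})\rceil-\lfloor B_Y^{>1}\rfloor
$$
for all $0\leq c\leq 1$, push forward to conclude that the non-qlc ideal is unchanged, and finish with adjunction. That is exactly the paper's computation.

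There is, however, one concrete misstep in your setup. You propose to obtain the \emph{globally embedded} simple normal crossing structure by setting $M=Y\times\mathbb C$, as in Example \ref{f-ex2.14}. That device works in Example \ref{f-ex2.14} only because there $Y$ is a resolution, hence smooth, so $Y\times\{0\}$ is a smooth divisor in the smooth variety $Y\times\mathbb C$. In Lemma \ref{f-lem3.3} the source $Y$ of a quasi-log resolution is in general a reducible, singular simple normal crossing divisor, so $Y\times\mathbb C$ is not smooth and cannot serve as the ambient space required by Definition \ref{f-def2.9}. The correct move, and the one the paper makes, is to modify the \emph{original} ambient space $M$ of $(Y,B_Y)$ via \cite[Proposition 6.3.1]{fujino-foundations} and blow-ups as in \cite[Lemma 5.8.8]{fujino-foundations} so that $(Y,B_Y+f^*D)$ becomes globally embedded; relatedly, the smoothness and transversality of the general pullback should be checked on the normalization $Y^\nu$ of $Y$ (with $K_{Y^\nu}+\Theta=\nu^*(K_Y+B_Y)$), not on the singular scheme $Y_0$ itself. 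These repairs do not change the structure of your proof, but as written the construction of the quasi-log resolution for $[X,\omega+cD]$ does not go through. The remainder of your argument, including the verification via Remark \ref{f-rem2.11} that each $D^\dag$ is dominated by a stratum when $c=1$, is fine and matches the paper's (terser) treatment.
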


\begin{proof}
Let $f\colon (Y, B_Y)\to X$ be a quasi-log resolution 
of $[X, \omega]$. Let $\nu\colon Y^\nu\to Y$ be the normalization of $Y$ 
with $K_{Y^\nu}
+\Theta=\nu^*(K_Y+B_Y)$ as usual. 
If $D$ is a general member of $\Lambda$, then 
$\nu^*f^*D$ is smooth, $\nu^*f^*D$ and $\Theta$ have no common 
components, and $\Supp (\nu^*f^*D+\Theta)$ is a simple 
normal crossing divisor on $Y^\nu$. 
By taking some blow-ups along irreducible components 
of $f^*D$ repeatedly (see \cite[Lemma 5.8.8]{fujino-foundations}), 
we may further assume that 
$(Y, B_Y+f^*D)$ is a globally embedded simple 
normal crossing pair (see \cite[Proposition 6.3.1]{fujino-foundations}). 
Since 
$$
\lfloor (B_Y+f^*cD)^{>1}\rfloor =\lfloor B^{>1}_Y\rfloor 
\quad \text{and} 
\quad 
0\leq \lceil -(B_Y+f^*cD)^{<1}\rceil = 
\lceil -(B^{<1}_Y)\rceil
$$ 
hold for every $0\leq c\leq 1$, 
we obtain that the following equality 
$$
f_*\mathcal O_Y(\lceil -(B_Y+f^*cD)^{<1}\rceil-
\lfloor (B_Y+f^*cD)^{>1}\rfloor)=f_*\mathcal O_Y(\lceil -(B^{<1}_Y)\rceil
-\lfloor B^{>1}_Y\rfloor). 
$$ 
holds true for every $0\leq c\leq 1$. 
Therefore, we obtain that 
$$
\left(X, \omega+cD, f\colon  (Y, B_Y+f^*cD)\to X\right)
$$ 
is a quasi-log scheme with 
$\Nqlc(X, \omega+cD)=\Nqlc(X, \omega)$ for 
every $0\leq c\leq 1$. 
By construction, the quasi-log scheme structure of $[X, \omega+cD]$ is 
independent of $c$ outside $\Supp\!D$. 
It is obvious that every irreducible component $D^\dag$ of $D$ is a qlc 
center of $[X, \omega+D]$. 
Therefore, by adjunction (see Theorem \ref{f-thm2.13} (i)), 
we obtain the desired statement. 
\end{proof}

In order to explain how to 
make new quasi-log scheme structures, let us 
treat the following proposition. 

\begin{prop}\label{f-prop3.4}
Let $[X, \omega]$ be a quasi-log scheme and 
let $L$ be a Cartier divisor on $X$ such that 
$\Bs\!|L|$ contains no qlc strata of $[X, \omega]$ and 
that $\Bs\!|L|$ is disjoint from $X_{-\infty}$. 
If $D$ is a general member of $|L|$. 
Then there exists $0< c\leq 1$ such that 
$[X, \omega+cD]$ becomes a quasi-log scheme with 
$\Nqlc(X, \omega+cD)=\Nqlc(X, \omega)$ and 
that there exists a qlc center $C$ of $[X, \omega+cD]$ 
with $C\cap \Bs\!|L|\ne \emptyset$. 
\end{prop}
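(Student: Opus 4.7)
The plan is to mimic Lemma~\ref{f-lem3.3} with a scaling factor $c \in (0,1]$ determined by the fixed part of the pullback of $|L|$. After passing to a suitable blow-up of $Y$, the obstruction to applying Lemma~\ref{f-lem3.3} directly is localized on the fixed divisor of $f^*|L|$, and a weighted coefficient analysis yields the required $c$ and the promised new qlc center.

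First, let $f \colon (Y, B_Y) \to X$ be a quasi-log resolution of $[X, \omega]$. By \cite[Lemma~5.8.8 and Proposition~6.3.1]{fujino-foundations} I may further blow up $Y$ so that $f^*|L|$ decomposes as $f^*|L| = |M| + F$, with $|M|$ basepoint-free and $F$ an effective integral fixed divisor, and so that for a general $D \in |L|$ the pair $(Y, B_Y + f^*D)$ is a globally embedded simple normal crossing pair with $f^*D = M_D + F$, where $M_D \in |M|$ is general. By construction each prime component of $F$ maps into $\Bs|L|$, while $M_D$ has no component mapping into $\Bs|L|$ and is transverse to $\Supp B_Y$.

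Next I check that every prime component $E$ of $F$ satisfies $b_E := \operatorname{coeff}_E B_Y < 1$. A coefficient $b_E > 1$ would force $f(E) \subset X_{-\infty}$, contradicting $\Bs|L| \cap X_{-\infty} = \emptyset$; and $b_E = 1$ would make $f(E)$ a qlc stratum of $[X, \omega]$ contained in $\Bs|L|$, contradicting the hypothesis. With $e_E := \operatorname{coeff}_E F \in \mathbb{Z}_{\geq 1}$, set
\[
c := \min\Bigl\{1,\ \min_{E \subset F} \tfrac{1 - b_E}{e_E}\Bigr\} \in (0, 1].
\]
By this choice no coefficient of $B_Y + c f^*D$ exceeds $1$ unless it already did in $B_Y$: for $E \subset F$ we have $b_E + c e_E \leq 1$; for components of $M_D$ the new coefficient is $c \leq 1$ (starting from $0$); and all other prime divisors are unchanged. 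Arguing as in Lemma~\ref{f-lem3.3}, the data $f \colon (Y, B_Y + c f^*D) \to X$ endow $X$ with a quasi-log scheme structure $[X, \omega + cD]$ with $\Nqlc(X, \omega + cD) = \Nqlc(X, \omega)$.

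Finally, by construction some prime divisor on $Y$ attains coefficient exactly $1$ in $B_Y + c f^*D$. If such a divisor lies inside $F$, its image is a qlc center contained in $\Bs|L|$ and we are done. Otherwise $c = 1$ and $(1 - b_E)/e_E > 1$ for every $E \subset F$; in particular $F$ contains no strict transform of a divisorial fixed component of $|L|$ on $X$ (such a transform has $b = 0$ generically and $e \geq 1$, so $(1-b)/e \leq 1$). Therefore $|L|$ has no divisorial fixed part, $\Bs|L|$ has codimension at least two in $X$, and every component of $D$ occurs as the strict-transform image of a component of $M_D$. The new qlc centers at $c = 1$ are then precisely the components of $D$, so since $\Bs|L| \subset D$ is nonempty, at least one component of $D$ meets $\Bs|L|$. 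The main difficulty is coordinating the choice of $c$ so that both the non-qlc locus is preserved \emph{and} a new qlc center meeting $\Bs|L|$ actually appears; the hypotheses enter precisely through the inequalities $b_E < 1$ and $e_E \geq 1$ on components of $F$, which together guarantee $c > 0$ and force the existence of such a center.
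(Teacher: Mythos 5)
Your overall strategy coincides with the paper's: your $c=\min\bigl\{1,\min_{E\subset F}(1-b_E)/e_E\bigr\}$ is exactly the threshold $\sup\{t\mid (tf^*D+B_Y)^{>1}=0$ over $X\setminus X_{-\infty}\}$ that the paper uses, your argument that $b_E<1$ for components of $F$ matches the paper's use of the two hypotheses on $\Bs|L|$, and the case where the minimum is attained by some $E_0\subset F$ is handled identically (the image of $E_0$ is a qlc center inside $\Bs|L|$). The gap is in your remaining case, where $(1-b_E)/e_E>1$ for every $E\subset F$, i.e.\ $b_E<1-e_E\leq 0$ for all of them. Your way out is the parenthetical claim that the strict transform of a divisorial fixed component of $|L|$ has ``$b=0$ generically.'' This is unjustified and false: for the quasi-log structure of Example \ref{f-ex2.14} attached to an lc pair $(X,\Delta)$, such a transform has $b=\operatorname{coeff}\Delta$, which can be positive; and for a general quasi-log scheme nothing in Definition \ref{f-def2.10} prevents an individual component of $Y$ dominating a divisor of $X$ from having $b<0$. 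Consequently you cannot conclude that $|L|$ has no divisorial fixed part, and if $D^\dag\subset\Bs|L|$ is such a fixed component with every point of $\Bs|L|$ lying only on components of $\Supp D$ contained in $\Bs|L|$ (e.g.\ $\Bs|L|=D^\dag$ and the mobile part of $D$ disjoint from $D^\dag$), then in your setup no coefficient reaches $1$ over $\Bs|L|$ and no qlc center meeting $\Bs|L|$ is produced.

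What is missing is precisely the one place where condition (2) of Definition \ref{f-def2.10} must be invoked --- and your proof never uses it, which is a structural warning sign, since the statement is false for arbitrary $\mathbb R$-divisors $B_Y$ with $K_Y+B_Y\sim_{\mathbb R}f^*\omega$. The paper's Claim supplies the missing step: if every component $G$ of $f^*D$ lying over the generic point of such a $D^\dag$ (necessarily a component of $F$, since $f^{-1}\Bs|L|=\Supp F$) satisfied $b_G+e_G<1$, then $\lceil -b_G\rceil\geq e_G$ for all of them, so a local equation of $D$ inverted would lie in $f_*\mathcal O_Y(\lceil-(B_Y^{<1})\rceil)$ at that generic point, i.e.\ the isomorphism $\mathcal O_X\overset{\simeq}{\to}f_*\mathcal O_Y(\lceil-(B_Y^{<1})\rceil)$ over $X\setminus X_{-\infty}$ would factor through $\mathcal O_X(D)\supsetneq\mathcal O_X$ --- a contradiction. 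Hence some $G\subset F$ over $D^\dag$ has $b_G+e_G\geq 1$, which combined with your $c=1$ forces $b_G+e_G=1$, and $f(G)=D^\dag\subset\Bs|L|$ is the desired qlc center. (A smaller slip: $c=1$ only gives $\min_E(1-b_E)/e_E\geq 1$, not $>1$; but if equality holds for some $E_0$ you are done immediately, so only the strict case needs the argument above.) With this repair your proof becomes essentially the paper's.
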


\begin{proof}
Let $f\colon (Y, B_Y)\to X$ be a quasi-log resolution of $[X, \omega]$. 
Since $D$ is a general member of $|L|$, $\Bs\!|L|$ contains 
no qlc strata of $[X, \omega]$, and $\Bs\!|L|\cap 
X_{-\infty}=\emptyset$, $f^*D$ is a well-defined Cartier 
divisor on $Y$. We note that 
$[X, \omega+cD]$ becomes a quasi-log scheme 
with $\Nqlc(X, \omega+cD)=\Nqlc(X, \omega)$ outside 
$\Bs\!|L|$ for every $0\leq c\leq 1$ by Lemma \ref{f-lem3.3}. 

By taking a suitable birational modification of the 
ambient space $M$ of $(Y, B_Y)$ 
(see \cite[Proposition 6.3.1]{fujino-foundations}), 
we may assume that 
$$
(Y, f^*D+\Supp\!B_Y)
$$ 
is a globally embedded simple normal crossing pair. 
We may further assume that 
$f^*D$ and $\Supp\!B_Y$ have no common components 
outside $f^{-1}\Bs\!|L|$ and that 
$f^*D$ is reduced outside $f^{-1}\Bs\!|L|$. 

We put 
$$
c=\sup \{t\in \mathbb R \, |\,  \text{$(tf^*D+B_Y)^{>1}=0$ 
holds over $X\setminus X_{-\infty}$}\}. 
$$ 
Then we have: 
\begin{claim}
We have $0< c\leq 1$. 
\end{claim}
\begin{proof}[Proof of Claim]
By replacing $X$ with $X\setminus X_{-\infty}$, we may 
assume that $X_{-\infty}=\emptyset$.  
Therefore, the natural map 
$$
\mathcal O_X\to f_*\mathcal O_Y(\lceil -(B^{<1}_Y)\rceil)
$$ 
is an isomorphism. 
Since $B^{>1}_Y=0$ by $X_{-\infty}=\emptyset$, 
the inequality $0<c$ is obvious because 
$D$ is a general member of $|L|$ and 
$\Bs\!|L|$ contains no qlc strata of $[X, \omega]$. 
We assume that the inequality $c>1$ holds. 
Then the natural map 
$$
\mathcal O_X\to f_*\mathcal O_Y(\lceil -(B^{<1}_Y)\rceil)
$$ factors through $\mathcal O_X(D)$, that is, we have:  
$$
\mathcal O_X\hookrightarrow 
\mathcal O_X(D)\to f_*\mathcal O_Y(\lceil -(B^{<1}_Y)\rceil). 
$$ 
This is a contradiction. 
Hence we get the desired inequality $c\leq 1$. 
\end{proof}
We consider 
$$
\left(X, \omega+cD, f\colon (Y, B_Y+cf^*D)\to 
X\right). 
$$ 
It is obvious that $f^*(\omega+cD)
\sim _{\mathbb R} K_Y+B_Y+cf^*D$ holds 
since $f^*\omega\sim _{\mathbb R} K_Y+B_Y$. 
We note that 
$$
0\leq \lceil -(B_Y+cf^*D)^{<1}\rceil 
\leq \lceil -(B^{<1}_Y)\rceil 
$$ 
obviously holds and 
that 
$$
\lceil -(B_Y+cf^*D)^{<1}\rceil -\lfloor (B_Y+cf^*D)^{>1}\rfloor 
=\lceil -(B^{<1}_Y)\rceil -\lfloor B^{>1}_Y\rfloor  
$$ holds 
over a neighborhood of $X_{-\infty}$. 
Therefore, 
$$
\left(X, \omega+cD, f\colon (Y, B_Y+cf^*D)\to 
X\right). 
$$ is a quasi-log scheme with 
$\Nqlc(X, \omega+cD)=
\Nqlc(X, \omega)$. 

If $c=1$, then we see that 
every irreducible 
component $D^\dag$ of $\Supp\!D$ with $D^\dag
\not\subset X_{-\infty}$ is a qlc center of $[X, \omega+D]$ 
by the proof of Claim. 
Therefore, we can find a qlc center 
$C$ of $[X, \omega+D]$ with $C\cap 
\Bs\!|L|\ne \emptyset$. 

If $c<1$, then we can find an irreducible component 
$G$ of $(cf^*D+B_Y)^{=1}$ such that 
$f(G)\cap \Bs\!|L|\ne \emptyset$ by 
construction. 
Thus $C:=f(G)$ is a desired qlc center of $[X, \omega+cD]$. 
\end{proof}

\section{Proof of Theorem \ref{f-thm1.6}}
\label{f-sec4}

In this section, we will prove Theorem \ref{f-thm1.6}, which may look 
artificial but is very useful. 

\medskip

Let us start with an easy lemma, which 
follows from Fujita's theory of $\Delta$-genera 
(see \cite{fujita1}). 

\begin{lem}\label{f-lem4.1}
Let $[X, \omega]$ be a projective quasi-log canonical 
pair such that 
$X$ is irreducible with $n=\dim X\geq 1$. 
Let $L$ be an ample Cartier divisor on $X$ such that 
$\omega+rL\equiv 0$. 
Then the inequality $r\leq n+1$ holds. 
We further assume that $r>n-1$ holds. 
Then the complete linear system $|L|$ is basepoint-free. 
\end{lem}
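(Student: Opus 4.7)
The plan is to read off $P(k) := \chi(X, \mathcal O_X(kL))$ as a polynomial in $k$ whose integer values in $(-r, 0]$ are entirely forced, determine $P$ from these constraints together with its leading coefficient, and then extract both $r \leq n+1$ and $h^0(X, L) = L^n + n$. Fujita's $\Delta$-genus theorem (Theorem \ref{f-thm2.8}) then converts $\Delta(X, L) = 0$ into basepoint-freeness of $|L|$.

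First I would establish the vanishing: since $r > n - 1 \geq 0$ forces $r > 0$, the class $-\omega \equiv rL$ is ample, and more generally for any integer $k$ with $k > -r$ the divisor $kL - \omega \equiv (k+r)L$ is ample. Theorem \ref{f-thm2.13} (ii) applied to $[X, \omega]$ with $X' = \emptyset$ then gives $H^i(X, \mathcal O_X(kL)) = 0$ for all $i > 0$ and all such $k$. Next, $X$ is itself a qlc stratum of $[X, \omega]$ by Remark \ref{f-rem2.11}, hence reduced by Definition \ref{f-def2.10}, so $h^0(X, \mathcal O_X) = 1$. For a negative integer $k$, a nonzero section of $\mathcal O_X(kL)$ would exhibit $kL$ as effective, contradicting $kL \cdot L^{n-1} = k L^n < 0$. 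Combining, $P(k) = h^0(X, \mathcal O_X(kL))$ for every integer $k > -r$, and $P(k) = 0$ for every integer $k \in (-r, 0)$.

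By Snapper's theorem, $P$ is a polynomial in $k$ of degree $n$ with leading coefficient $L^n/n! > 0$, and $P(0) = 1$. If $r > n + 1$, then $P$ would vanish at the $n+1$ distinct integers $-1, -2, \ldots, -(n+1)$, contradicting $\deg P = n$; hence $r \leq n+1$. Under the stronger hypothesis $r > n - 1$, $P$ has prescribed zeros at $-1, \ldots, -(n-1)$, so it factors as
$$
P(k) = (k+1)(k+2)\cdots(k+n-1) \cdot \frac{L^n\, k + n}{n!},
$$
the linear factor being pinned down by the leading coefficient and by $P(0) = 1$. Substituting $k = 1$ yields $h^0(X, L) = P(1) = L^n + n$, i.e.\ $\Delta(X, L) = 0$, and then Theorem \ref{f-thm2.8} gives $\Bs|L| = \emptyset$.

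The main obstacle is conceptual rather than technical: one must recognize that the qlc vanishing together with the anti-ampleness of $kL$ for $k < 0$ jointly force enough integer zeros of the Hilbert polynomial to essentially determine it. Once that is seen, the bound on $r$ and the exact value of $h^0(X, L)$ (hence $\Delta(X, L) = 0$) follow by elementary algebra, and Fujita's theorem closes the argument. The only verifications that require care are the applicability of Theorem \ref{f-thm2.13} (ii) with $X' = \emptyset$ and the reducedness of $X$, both of which are immediate from the definitions.
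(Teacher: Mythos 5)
Your proposal is correct and follows essentially the same route as the paper: both compute the Hilbert polynomial $\chi(X,\mathcal O_X(tL))$, use the qlc vanishing theorem (Theorem \ref{f-thm2.13} (ii)) together with the vanishing of $H^0$ of negative multiples of $L$ to force zeros at $-1,\dots$, deduce $r\leq n+1$ from the degree bound and, when $r>n-1$, pin down the polynomial via its leading coefficient $L^n/n!$ and $\chi(\mathcal O_X)=1$ to get $h^0(X,L)=L^n+n$, i.e.\ $\Delta(X,L)=0$, and conclude with Fujita's theorem (Theorem \ref{f-thm2.8}). The only differences are cosmetic (you justify $h^0(\mathcal O_X(kL))=0$ for $k<0$ and the reducedness of $X$ explicitly, which the paper takes for granted).
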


\begin{proof}
Let us consider 
$$
\chi(t):=\chi (X, \mathcal O_X(tL))=\sum _{i=0}^n (-1)^i \dim _{\mathbb C} 
H^i(X, \mathcal O_X(tL)). 
$$
Since $L$ is ample, $\chi(t)$ is a nontrivial polynomial with 
$\deg\chi (t)=\dim X=n$. 
\begin{step}\label{f-4.1-step1}
In this step, we will prove that $r\leq n+1$. 

We assume that $r>n+1$ holds. Then 
$$
H^i(X, \mathcal O_X(tL))=0
$$ 
for $i>0$ and $t\in \mathbb Z$ with 
$t\geq -(n+1)$ since 
$tL-\omega\equiv (t+r)L$ is ample for 
$t\geq -(n+1)$ (see Theorem \ref{f-thm2.13} (ii)). 
On the other hand, 
$$
H^0(X, \mathcal O_X(tL))=0
$$ 
for $t<0$ since $L$ is ample. 
Therefore, we have $\chi(t)=0$ for $t=-1, \ldots, -(n+1)$. 
This implies that $\chi(t)\equiv 0$ holds. 
This is a contradiction. Hence we obtain the desired 
inequality $r\leq n+1$. 
\end{step}
\begin{step}\label{f-4.1-step2}
In this step, we will prove that $|L|$ is basepoint-free under 
the assumption that $r>n-1$ holds. 

As in Step \ref{f-4.1-step1}, 
we have $\chi(t)=0$ for $t=-1, \ldots, -(n-1)$ since 
$r>n-1$ by assumption. 
Therefore, we get 
$$
\chi (X, \mathcal O_X(tL))=\frac{1}{n!} 
(\alpha t+\beta)(t+1)\cdots (t+n-1)
$$ 
for some rational numbers $\alpha$ and $\beta$. 
It is well known that $\alpha=L^n$. 
We note that 
$$
\chi (X, \mathcal O_X)=\dim _\mathbb C H^0(X, \mathcal O_X)=1. 
$$
Therefore, $\beta=n$ holds. 
Hence we obtain 
$$
\dim _{\mathbb C} H^0(X, \mathcal O_X(L))=L^n+n. 
$$
This implies that 
$$
\Delta(X, L)=L^n+n-\dim _{\mathbb C} H^0(X, \mathcal O_X(L))=0
$$ 
holds. 
Thus we obtain that $|L|$ is basepoint-free by 
Theorem \ref{f-thm2.8} (see also  
\cite[Corollary 1.10]{fujita1}). 
\end{step}
We obtained all the desired statements. 
\end{proof}

The following example shows that 
the assumption $r>n-1$ in Lemma \ref{f-lem4.1} is 
sharp. 

\begin{ex}\label{f-ex4.2} 
Let $X$ be a Del Pezzo surface of degree one. 
We put $L=-K_X$. 
Then $K_X+rL=0$ with $r=1$ holds. 
We note that $r=1=2-1=\dim X-1$ holds. 
It is easy to check that 
$|L|=|-K_X|$ is not basepoint-free. 
\end{ex}

We can prove the following corollary. 

\begin{cor}\label{f-cor4.3}
Let $[X, \omega]$ be a projective quasi-log canonical 
pair. Note that $X$ may be reducible. 
Let $L$ be an ample Cartier divisor on $X$ such that $\omega+rL\equiv 0$ with 
$r>n-1$, where $n=\dim X$. 
Then the complete linear system $|L|$ is basepoint-free. 
\end{cor}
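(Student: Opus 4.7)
The plan is to reduce the reducible case to the irreducible one handled by Lemma \ref{f-lem4.1}, using adjunction to pass to a single irreducible component and the vanishing theorem on $X$ to lift sections back up. The heavy machinery is already in place in Section \ref{f-sec2}, so the argument should be short.

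First I would fix an arbitrary point $x \in X$ and try to produce a section of $\mathcal{O}_X(L)$ that does not vanish at $x$. Since $[X, \omega]$ is quasi-log canonical we have $X_{-\infty} = \emptyset$, and by Remark \ref{f-rem2.11} every irreducible component of $X$ is a qlc stratum. Choose any irreducible component $X_1$ containing $x$. By adjunction (Theorem \ref{f-thm2.13} (i)) applied with $X' = X_1$, the pair $[X_1, \omega|_{X_1}]$ is again a projective quasi-log canonical pair, and it is irreducible by construction. The divisor $L|_{X_1}$ is ample on $X_1$, the relation $\omega|_{X_1} + r L|_{X_1} \equiv 0$ is inherited from $X$, and $r > n-1 \geq \dim X_1 - 1$ (when $\dim X_1 = 0$ the claim is trivial). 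Lemma \ref{f-lem4.1} then yields that $|L|_{X_1}|$ is basepoint-free, so there exists $s_1 \in H^0(X_1, \mathcal{O}_{X_1}(L))$ with $s_1(x) \neq 0$.

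Next I would lift $s_1$ using the short exact sequence
$$
0 \to \mathcal{I}_{X_1} \otimes \mathcal{O}_X(L) \to \mathcal{O}_X(L) \to \mathcal{O}_{X_1}(L) \to 0,
$$
where $\mathcal{I}_{X_1}$ is the defining ideal sheaf of $X_1$ in $X$. Applying the vanishing theorem (Theorem \ref{f-thm2.13} (ii)) with $X' = X_1$ (a qlc stratum, hence admissible), and observing that $L - \omega \equiv (r+1)L$ is ample since $L$ is ample and $r > 0$, one gets $H^1(X, \mathcal{I}_{X_1} \otimes \mathcal{O}_X(L)) = 0$. Thus the restriction $H^0(X, \mathcal{O}_X(L)) \to H^0(X_1, \mathcal{O}_{X_1}(L))$ is surjective. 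Any lift $s \in H^0(X, \mathcal{O}_X(L))$ of $s_1$ satisfies $s(x) = s_1(x) \neq 0$, and since $x$ was arbitrary, $|L|$ is basepoint-free.

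There is no real obstacle here: Lemma \ref{f-lem4.1} and Theorem \ref{f-thm2.13} do all the work, and the only things to verify are the admissibility of $X_1$ as a choice of $X'$ in adjunction and vanishing (automatic from Remark \ref{f-rem2.11}) and the ampleness of $L-\omega$ on $X$ (automatic from $\omega + rL \equiv 0$ and ampleness of $L$). The mild subtlety worth flagging in the write-up is the trivial treatment of zero-dimensional components, so that the inequality $r > \dim X_1 - 1$ used in Lemma \ref{f-lem4.1} is legitimately available for the nontrivial components.
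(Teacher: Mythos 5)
Your proposal is correct and follows essentially the same route as the paper: restrict to an irreducible component via adjunction, apply Lemma \ref{f-lem4.1} there (treating zero-dimensional components trivially), and lift sections using the vanishing of $H^1(X, \mathcal I_{X_1}\otimes \mathcal O_X(L))$ from Theorem \ref{f-thm2.13} (ii). The only cosmetic difference is that you fix a point first and choose a component through it, while the paper runs over all components; the substance is identical.
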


\begin{proof}
Let $X_i$ be any irreducible component of 
$X$. Since $X_i$ is a qlc stratum of $[X, \omega]$, 
$[X_i, \omega|_{X_i}]$ is a quasi-log canonical 
pair by adjunction (see Theorem \ref{f-thm2.13} (i)). 
If $\dim X_i=0$, then $|L|_{X_i}|$ 
is obviously basepoint-free. When 
$\dim X_i>0$, the complete linear system 
$|L|_{X_i}|$ is basepoint-free by 
Lemma \ref{f-lem4.1} because 
$\omega|_{X_i} +rL|_{X_i}\equiv 0$ with 
$r>\dim X_i -1$. 
Since $L-\omega\equiv (r+1)L$ is ample, 
we have $H^1(X, \mathcal I_{X_i}\otimes 
\mathcal O_X(L))=0$ by 
Theorem \ref{f-thm2.13} (ii), 
where $\mathcal I_{X_i}$ is the defining ideal sheaf of 
$X_i$ on $X$. 
Therefore the restriction map 
$$
H^0(X, \mathcal O_X(L))\to H^0(X_i, \mathcal O_{X_i}
(L))
$$ 
is surjective. 
This implies that $|L|$ is basepoint-free. 
\end{proof}

Let us prove Theorem \ref{f-thm1.6}. 

\begin{proof}[Proof of Theorem \ref{f-thm1.6}]
We divide the proof into several small steps. 
\setcounter{step}{0}
\begin{step}\label{f-4.2-step1}
If $\dim (X\setminus X_{-\infty})=0$, then 
the statement is obvious. 
From now on, we assume $n\geq 1$ and use induction on $\dim 
(X\setminus X_{-\infty})$. 
Therefore, we assume that 
the statement holds true when $\dim (X\setminus X_{-\infty})<n$. 
\end{step}
\begin{step}\label{f-4.2-step2}
Let $C$ be a qlc stratum of $[X, \omega]$. 
We put $X'=C\cup X_{-\infty}$. 
Then, by adjunction (see Theorem \ref{f-thm2.13} (i)), 
$[X', \omega|_{X'}]$ is a quasi-log scheme. 
Note that $\omega|_{X'}+r\mathcal L|_{X'}\equiv 0$ holds. 
Let $\mathcal I_{X'}$ be the defining ideal sheaf 
of $X'$ on $X$.  
By Theorem \ref{f-thm2.13} (ii), 
we have 
$H^1(X, \mathcal I_{X'}\otimes \mathcal L)=0$ since 
$\mathcal L-\omega\equiv (r+1) \mathcal L$ is ample. 
Therefore, the natural restriction 
map 
\begin{equation}\label{f-eq4.1}
H^0(X, \mathcal L)\to H^0(X', \mathcal L|_{X'}) 
\end{equation}
is surjective. 
\end{step}
\begin{step}\label{f-4.2-step3}
If $\dim C<n$, then $|\mathcal L|_{X'}|$ is basepoint-free 
by the induction hypothesis. 
By \eqref{f-eq4.1}, $|\mathcal L|$ is basepoint-free 
in a neighborhood of $X'$. 
\end{step}
\begin{step}\label{f-4.2-step4}
If $\dim C=n$ and $C\cap X_{-\infty}=\emptyset$, then 
$|\mathcal L|_C|$ is basepoint-free 
by Lemma \ref{f-lem4.1} since 
$[C, \omega|_C]$ is an irreducible 
quasi-log canonical pair with 
$$\omega|_C+r\mathcal L|_C\equiv 0$$ and 
$$r>\dim (X'\setminus 
X'_{-\infty})-1=\dim C-1. 
$$ 
We note that $|\mathcal L|_{X_{-\infty}}|$ is basepoint-free 
by assumption. 
Therefore, $|\mathcal L|_{X'}|$ is obviously basepoint-free. 
Hence, by \eqref{f-eq4.1}, $|\mathcal L|$ is basepoint-free 
in a neighborhood of $X'$. 
\end{step}
\begin{step}\label{f-4.2-step5}
By Steps \ref{f-4.2-step3}, \ref{f-4.2-step4}, and 
\eqref{f-eq4.1}, we may assume that 
$X\setminus X_{-\infty}$ is irreducible with 
$\dim (X\setminus X_{-\infty})=n$ such that 
$X$ is connected. Since $\mathcal L-\omega\equiv 
(r+1)\mathcal L$ is ample, 
$H^1(X, \mathcal I_{X_{-\infty}}\otimes \mathcal L)=0$ by 
Theorem \ref{f-thm2.13} (ii). 
Therefore, the natural restriction map 
$$
H^0(X, \mathcal L)\to H^0(X_{-\infty}, \mathcal L|_{X_{-\infty}})
$$ 
is surjective. Since $|\mathcal L|_{X_{-\infty}}|$ is basepoint-free 
by assumption, 
the base locus $\Bs\!|\mathcal L|$ of $|\mathcal L|$ is disjoint 
from $X_{-\infty}$. Since $X\setminus X_{-\infty}$ is irreducible 
and $X$ is connected, $\Bs\!|\mathcal L|$ does not contain $X\setminus 
X_{-\infty}$. 
By Step \ref{f-4.2-step3}, $\Bs\!|\mathcal L|$ contains no qlc 
centers of $[X, \omega]$. Hence 
$\Bs\!|\mathcal L|$ contains no qlc strata of $[X, \omega]$. 

We assume that $\Bs\!|\mathcal L|\ne \emptyset$. 
We take a general member $D$ of $|\mathcal L|$.  
Then we can take $0<c\leq 1$ such 
that $[X, \omega+cD]$ is a quasi-log scheme with 
$$\Nqlc (X, \omega+cD)=\Nqlc (X, \omega)$$ and 
that there exists a qlc center $C$ of $[X, \omega+cD]$ with 
$C\cap \Bs\!|\mathcal L|\ne \emptyset$ by construction (see 
Proposition \ref{f-prop3.4}). 
We put $$X'=C\cup \Nqlc (X, \omega+cD). 
$$
By adjunction (see Theorem \ref{f-thm2.13} (i)), 
$[X', (\omega+cD)|_{X'}]$ is a quasi-log scheme. 
By construction, $\dim C<n$ and 
$$(\omega+cD)|_{X'}+(r-c)\mathcal L|_{X'}\equiv 0$$ hold. 
Note that $$r-c> \dim C-1=\dim (X'\setminus 
X'_{-\infty})-1$$ holds. 
Therefore, by the induction hypothesis, 
$|\mathcal L|_{X'}|$ is basepoint-free. 
Since $\mathcal L-(\omega+cD)\equiv (r+1-c)\mathcal L$ 
is ample, 
$H^1(X, \mathcal I_{X'}\otimes \mathcal L)=0$ 
by Theorem \ref{f-thm2.13} (ii), 
where $\mathcal I_{X'}$ is the defining ideal sheaf 
of $X'$ on $X$. 
Thus, the restriction map 
$$
H^0(X, \mathcal L)\to H^0(X', \mathcal L|_{X'}) 
$$ 
is surjective. In particular, 
$|\mathcal L|$ is basepoint-free in a neighborhood 
of $C$. This is a contradiction since $C\cap \Bs\!|\mathcal L|\ne 
\emptyset$. 
Hence, we obtain $\Bs\!|\mathcal L|=\emptyset$. 
\end{step} 
We obtained the desired statement. 
\end{proof}

\section{Proof of Theorem \ref{f-thm1.1}}\label{f-sec5} 
Let us explain the idea of the proof of the relative spannedness 
in Theorem \ref{f-thm1.1}. 
We construct a sequence of closed subschemes 
$$
Z_0\subset Z_1\subset \cdots \subset Z_{k-1}\subset X
$$ 
such that $Z_{k-1}=F$ holds set theoretically. 
For every $i$, there exists an $\mathbb R$-Cartier divisor 
$\omega_i|_{Z_i}$ on $Z_i$ such that 
$[Z_i, \omega_i|_{Z_i}]$ is a quasi-log scheme and 
that $$\omega_i|_{Z_i}+rL|_{Z_i}\equiv 0$$ holds with 
$$r>\dim F-1\geq \dim Z_i-1. 
$$ 
We can make $[Z_i, \omega_i|_{Z_i}]$ satisfy that 
$(Z_0)_{-\infty}=\emptyset$ and that 
$(Z_{i+1})_{-\infty}\subset Z_i$ set theoretically for every $i$. 
By Theorem \ref{f-thm1.6}, the complete linear system 
$|L|_{Z_0}|$ is basepoint-free. 
By the vanishing theorem for quasi-log schemes, 
we obtain that the natural restriction map 
$$
f_*\mathcal O_X(L)\to f_*\mathcal O_{Z_i}(L|_{Z_i})
$$ 
is surjective for every $i$. 
Therefore, if $|L|_{Z_i}|$ is basepoint-free, 
then the relative base locus $\Bs_f\!|L|$ is disjoint from 
$Z_i$. 
This implies that $|L|_{(Z_{i+1})_{-\infty}}|$ is basepoint-free 
since we have $(Z_{i+1})_{-\infty}\subset Z_i$. 
Then, by Theorem \ref{f-thm1.6}, 
the complete linear system $|L|_{Z_{i+1}}|$ is basepoint-free. 
Hence we finally obtain that the relative base locus 
$\Bs_f\!|L|$ is disjoint from $F$. 
This means that 
$$
f^*f_*\mathcal O_X(L)\to \mathcal O_X(L)
$$ 
is surjective at every point of $F$. 

\medskip 

We start with the following 
easy lemma on log canonical pairs. 

\begin{lem}\label{f-lem5.1}
Let $(X, \Delta)$ be a log canonical pair and 
let $B$ be an effective $\mathbb R$-Cartier divisor 
on $X$ such that 
$(X, \Delta+B)$ is not log canonical. 
Then there exists an increasing sequence of 
real numbers 
$$
0\leq c_0< c_1< \cdots <c_{k-1}<c_k=1
$$
with the following properties. 
\begin{itemize}
\item[(i)] $c_0$ is the log canonical threshold 
of $(X, \Delta)$ with respect to $B$. 
\item[(ii)] We put $U_i=X\setminus \Nlc(X, 
\Delta+c_iB)$ 
for every $i$. 
Then $U_{i+1}\subsetneq U_i$ holds 
for every $0\leq i\leq k-1$. 
\item[(iii)] For every $1\leq i\leq k$, 
$X\setminus \Nlc (X, \Delta+tB)=U_i$ holds for any 
$t\in (c_{i-1}, c_i]$. 
\end{itemize}
In this situation, for each $i$ with 
$0\leq i\leq k-1$, there exists 
a finite set of log canonical centers $\{C_j\}_{j\in I_i}$ of 
$(X, \Delta+c_iB)$ such that 
$$
U_i\setminus U_{i+1}\subset \bigcup_{j\in I_i} C_j
$$ 
and that 
$$
\Nlc(X, \Delta+B)=\bigcup_{i=0}^{k-1}
\left(\bigcup_{j\in I_i} C_j\right)
$$ 
holds set theoretically. 
\end{lem}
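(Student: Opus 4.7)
The plan is to read off the entire sequence from a single log resolution of $(X, \Delta + B)$. First I would take a log resolution $f \colon Y \to X$ so that $\Supp(\Delta_Y + B_Y)$ is a simple normal crossing divisor, where $K_Y + \Delta_Y = f^*(K_X + \Delta)$ and $B_Y = f^*B$. Decomposing $\Delta_Y = \sum_j a_j E_j$ and $B_Y = \sum_j b_j E_j$ into prime components, we have $a_j \le 1$ (since $(X, \Delta)$ is log canonical) and $b_j \ge 0$ (since $B$ is effective). By Definition \ref{f-def2.4}, for $t \ge 0$ the closed subset $\Nlc(X, \Delta + tB)$ is the image under $f$ of those components $E_j$ with $a_j + t b_j > 1$, i.e., with $(1-a_j)/b_j < t$. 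Thus the non-lc locus depends on $t$ only through the finite set
$$
S := \bigl\{(1-a_j)/b_j : b_j > 0 \bigr\} \cap [0, 1),
$$
which is nonempty because $(X, \Delta + B)$ is not log canonical.

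Next, writing $Z(t) := \Nlc(X, \Delta + tB)$, the function $Z$ is non-decreasing and left-continuous in $t$, jumping exactly at values $v \in S$ by adjoining $f\bigl(\bigcup_{(1-a_j)/b_j = v} E_j\bigr)$ to $Z(v)$. I would take $c_0 < c_1 < \cdots < c_{k-1}$ to be exactly those $v \in S$ for which this adjunction is not already contained in $Z(v)$, and set $c_k := 1$. Then $c_0 = \min S$ is the log canonical threshold of $(X, \Delta)$ with respect to $B$, giving (i). Between consecutive chosen values, any non-chosen $v \in S$ contributes a trivial jump, so $Z(t)$ is constant on each half-open interval $(c_{i-1}, c_i]$ for $1 \le i \le k$, and strictly enlarges as $t$ crosses each $c_i$. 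This simultaneously delivers (ii) and (iii), with $U_0 = X$ because $Z(c_0)$ is empty.

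For the final assertion, fix $i \in \{0, \ldots, k-1\}$ and $p \in U_i \setminus U_{i+1}$. Then $p$ belongs to the newly added part of the non-lc locus created by the jump at $c_i$, namely $p \in f\bigl(\bigcup_{(1-a_j)/b_j = c_i} E_j\bigr) \setminus Z(c_i)$. Choose $j$ with $p \in f(E_j)$ and $(1-a_j)/b_j = c_i$; then $a_j + c_i b_j = 1$, so the discrepancy of $E_j$ with respect to $(X, \Delta + c_i B)$ equals $-1$, making $E_j$ a log canonical place. Since the irreducible subscheme $C_j := f(E_j)$ contains the point $p \notin Z(c_i)$, its generic point avoids $\Nlc(X, \Delta + c_i B)$, so $(X, \Delta + c_i B)$ is sub log canonical at that generic point and $C_j$ is a genuine log canonical center. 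The resulting finite collection $\{C_j\}_{j \in I_i}$ covers $U_i \setminus U_{i+1}$, and summing over $i$ using $U_0 = X$ yields the set-theoretic equality $\Nlc(X, \Delta + B) = X \setminus U_k = \bigcup_{i=0}^{k-1} \bigcup_{j \in I_i} C_j$.

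The main subtlety I expect is careful bookkeeping with strict versus non-strict inequalities: a divisor with coefficient exactly $1$ at time $c_i$ is a log canonical place and is \emph{not} in $\Nlc(X, \Delta + c_i B)$, yet it becomes non-lc for every $t > c_i$. This asymmetry is precisely what forces the half-open intervals $(c_{i-1}, c_i]$ in (iii), and what attaches each newly non-lc point of $U_i \setminus U_{i+1}$ to a log canonical center of $(X, \Delta + c_i B)$ rather than of $(X, \Delta + c_{i+1} B)$.
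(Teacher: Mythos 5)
Your proposal is correct and follows essentially the same route as the paper: the paper's proof simply observes that the open sets $U_t=X\setminus \Nlc(X,\Delta+tB)$ jump at finitely many values of $t$ and asserts the existence of the sequence, while you make those jumping numbers explicit as the finitely many values $(1-a_j)/b_j$ read off from a single log resolution of $(X,\Delta+B)$, which is exactly the bookkeeping the paper leaves to the reader. The only detail worth adding is the trivial reverse inclusion $C_j=f(E_j)\subset \Nlc(X,\Delta+B)$ (immediate since $a_j+b_j>1$), which completes the final set-theoretic equality.
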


\begin{proof} 
We note that $c_i$ is a kind of jumping numbers 
of $(X, \Delta)$ with 
respect to $B$ for every $i$. 
More precisely, we consider the following Zariski open set 
$$
U_t:=X\setminus \Nlc(X, \Delta+tB)
$$ for every $t\in [0, 1]$ and increase $t$ from $0$ to $1$. 
Then there exists an increasing sequence of real numbers 
$$
0\leq c_0< c_1< \cdots <c_{k-1}<c_k=1
$$
satisfying the desired properties. 

The following description may be helpful. 
By the above construction, $c_0$ is the log canonical threshold of 
$(X, \Delta)$ with respect to $B$ and $c_k=1$. 
Let $\Nklt(X, \Delta+c_{i-1}B)$ denote the non-klt locus of 
$(X, \Delta+c_{i-1}B)$ for $1\leq i\leq k-1$. Equivalently, 
$V_{i-1}:=X\setminus \Nklt(X, \Delta+c_{i-1}B)$ is the largest 
Zariski open set of $X$ such that 
$(V_{i-1}, (\Delta+c_{i-1}B)|_{V_{i-1}})$ is kawamata log terminal. 
Then $c_i-c_{i-1}$ is the log canonical threshold of $(X, \Delta+c_{i-1}B)$ 
with respect to $B$ on the Zariski open set 
$V_{i-1}=X\setminus \Nklt(X, \Delta+c_{i-1}B)$ for 
$1\leq i\leq k-1$. 
\end{proof}

We prepare one more easy lemma. 

\begin{lem}\label{f-lem5.2}
Let $(X, \Delta)$ be a log canonical pair 
and let $B_1, \ldots, B_k$ be effective Cartier divisors 
on $X$ passing through a closed point $P$ of $X$. 
If $(X, \Delta+\sum _{i=1}^kB_i)$ is log 
canonical around $P$, then the inequality $k\leq \dim X$ holds. 
\end{lem}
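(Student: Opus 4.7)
The plan is to induct on $n := \dim X$, using adjunction to a divisorial log canonical place contained in $\Supp B_1$ to reduce to dimension $n-1$. The base case $n=0$ is vacuous since no effective Cartier divisor passes through $P$ when $X=\{P\}$; the base case $n=1$ is also immediate, because on a curve any effective Cartier divisor through $P$ has coefficient $\geq 1$ at $P$, so log canonicity of $(X,\Delta+\sum B_i)$ around $P$ forces $k \leq 1$.

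For the inductive step, assume $n\geq 2$ and $k\geq 1$, and pick an irreducible component $C$ of $\Supp B_1$ containing $P$. Write $B_1 = mC + R$ with $m\geq 1$ and $R$ effective not containing $C$. Then the coefficient of $C$ in $\Delta + \sum_i B_i$ is at least $m$; since $(X,\Delta+\sum B_i)$ is log canonical near $P$, this coefficient must equal $1$. Hence $m=1$, and none of $\Delta, R, B_2, \ldots, B_k$ contains $C$ in its support. In particular $C$ is a log canonical place of $(X,\Delta+\sum B_i)$ with coefficient exactly $1$, so adjunction for log canonical pairs applies: letting $\nu\colon C^{\nu}\to C$ be the normalization, one obtains an effective $\mathbb R$-divisor $\Delta' := \mathrm{Diff}_{C^{\nu}}(\Delta + R)$ such that $(C^{\nu},\Delta')$ is a log canonical pair of dimension $n-1$, and moreover
\[
\Bigl(C^{\nu},\; \Delta' + \sum_{i=2}^{k} B_i|_{C^{\nu}}\Bigr)
\]
is log canonical near every point $Q\in\nu^{-1}(P)$. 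Because each $B_i$ (for $i\geq 2$) is Cartier on $X$, contains $P$, and does not contain $C$, the restriction $B_i|_{C^{\nu}}$ is an effective Cartier divisor on $C^{\nu}$ passing through $Q$. Applying the inductive hypothesis to $(C^{\nu},\Delta')$ with the $k-1$ Cartier divisors $B_2|_{C^{\nu}},\ldots,B_k|_{C^{\nu}}$ yields $k-1\leq n-1$, hence $k\leq n$.

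The main point requiring care is the adjunction step: one must invoke the standard facts that (i) a prime divisor of coefficient exactly $1$ in a log canonical boundary yields an effective different making the restriction a log canonical pair, and (ii) the different is additive on Cartier summands that do not contain $C$, so $\mathrm{Diff}_{C^{\nu}}\bigl(\sum_{i\geq 2} B_i\bigr) = \sum_{i\geq 2} B_i|_{C^{\nu}}$. Both are well-documented in the foundational theory of log canonical pairs. As an alternative, one could argue directly on a log resolution $f\colon Y\to X$ of $(X,\Delta+\sum B_i)$: any prime divisor $E\subset Y$ with $f(E)=\{P\}$ satisfies $\mathrm{mult}_E(f^{*}B_i)\geq 1$ for each $i$ (the valuation is centered at $P$), combining with log canonicity gives $a(E,X,\Delta)\geq k-1$, and one concludes by producing some $E$ with $a(E,X,\Delta)\leq n-1$; but the inductive approach above bypasses the need for any minimal-log-discrepancy bound.
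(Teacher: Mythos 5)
Your proof is correct and follows essentially the same route as the paper: induction on $\dim X$ by restricting to (the normalization of) one of the boundary divisors $B_i$ via adjunction, which drops both the dimension and the number of divisors by one. The only differences are cosmetic --- you adjoin to a single irreducible component of $\Supp B_1$ and track the different explicitly, while the paper normalizes $B_k$ wholesale --- and your version is, if anything, slightly more careful about multiplicities and the effectivity of the different.
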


Although Lemma \ref{f-lem5.2} is well known, 
we prove it here for the reader's convenience. 

\begin{proof}
By shrinking $X$ around $P$, 
we may assume that $(X, \Delta+\sum _{i=1}^k B_i)$ is 
log canonical. If $\dim X=1$, then 
the statement is obvious. 
We use the induction on $\dim X$. 
So we assume that 
$\dim X\geq 2$ holds. 
Let $\nu\colon  Z\to B_k$ be the normalization of $B_k$. 
We put 
$$
K_Z+\Delta_Z=\nu ^* (K_X+\Delta+B_k). 
$$ 
Then $(Z, \Delta_Z)$ is log canonical 
by adjunction since $(X, \Delta+B_k)$ is 
log canonical. 
We note that $\Supp\!B_i$ and 
$\Supp\!B_k$ have no common irreducible components 
for $1\leq i\leq k-1$ since $(X, \Delta+\sum _{i=1}^kB_i)$ is 
log canonical. 
We take $Q\in \nu^{-1}(P)$. 
Then $(Z, \Delta_Z+\sum _{i=1}^{k-1}\nu^*B_i)$ is log canonical 
by adjunction and $Q\in \Supp\nu^*B_i$ for 
$1\leq i\leq k-1$. 
Therefore, we obtain 
$$
k-1\leq \dim Z=\dim X-1
$$ 
by the induction hypothesis. 
This means that the desired inequality $k\leq \dim X$ holds. 
\end{proof}

Let us prove Theorem \ref{f-thm1.1} by using 
Theorem \ref{f-thm1.6} and Lemma \ref{f-lem5.1}. 

\begin{proof}[Proof of Theorem \ref{f-thm1.1}]
Since $K_X+\Delta+rL\equiv _Y 0$, 
$-(K_X+\Delta)$ is $f$-ample, 
and $r>0$, 
we see that $L$ is $f$-ample. 
We put $f(F)=P$ and shrink $Y$ around 
$P$. 
Then we may assume that $Y$ is affine without loss of generality. 
We put $n=\dim X$ and 
take general hyperplane sections $B_1, \ldots, B_{n+1}$ 
on $Y$ such that $P\in \Supp\!B_i$ for every $i$. 
We put $$B=\sum _{i=1}^{n+1} f^*B_i. 
$$ 
Then $(X, \Delta+B)$ is log canonical 
outside $F$ and is not log canonical 
at every point of $F$ by Lemma \ref{f-lem5.2}. 
\setcounter{step}{0}
\begin{step}\label{f-5.2-step1}
Let $F'$ be any positive-dimensional irreducible component of 
$F$. 
In this step, we will prove that 
$\dim F'\geq r-1$ holds. 

We put 
$$
c=\max \{ t\in \mathbb R\, |\, \text{$(X, \Delta+tB)$ is 
log canonical at the generic point of $F'$}\},  
$$ 
that is, $c$ is the log canonical threshold of $(X, \Delta)$ with 
respect to $B$ at the generic point of $F'$. 
By construction, $0\leq c<1$ and $F'$ is a log canonical 
center of $(X, \Delta+cB)$. 
We now consider the natural quasi-log scheme structure 
of $[X, \Delta+cB]$ 
as in Example \ref{f-ex2.14}. 
We put 
$$
X'=F'\cup \Nqlc(X, \Delta+cB) 
$$ 
and consider the induced 
quasi-log scheme $[X', (K_X+\Delta+cB)|_{X'}]$ 
by adjunction (see Theorem \ref{f-thm2.13} (i)). 
Note that 
$$
tL|_{X'}-(K_X+\Delta +cB)|_{X'}\equiv (t+r)L|_{X'}
$$ 
is ample for $t>-r$ since $f(X')=P$. 
We note that 
$$
\deg \chi (X', \mathcal I_{X'_{-\infty}}\otimes 
\mathcal O_{X'}(tL))=\dim F'
$$ 
holds because $L|_{X'}$ is ample and 
the coherent ideal sheaf $\mathcal I_{X'_{-\infty}}$ on $X'$ can 
be considered a coherent sheaf on $F'$. 
More precisely,  
$\mathcal I_{X'_{-\infty}}\subset \mathcal O_{F'}$ holds 
since $\{0\}=\mathcal I_{F'}\cap \mathcal I_{X'_{-\infty}}\subset 
\mathcal O_{X'}$, where $\mathcal I_{F'}$ is the defining ideal sheaf of 
$F'$ on $X'$. 
By Theorem \ref{f-thm2.13}, 
$$
H^i(X', \mathcal I_{X'_{-\infty}}\otimes 
\mathcal O_{X'}(tL))=0
$$ 
for $i>0$ and $t\in \mathbb Z$ with 
$t>-r$. 
Since $L|_{X'}$ is ample, 
$$
H^0(X', \mathcal I_{X'_{-\infty}}\otimes \mathcal O_{X'}
(tL))=0
$$ 
for $t\in \mathbb Z$ with $t<0$. 
Therefore, we obtain 
$$
\chi (X', \mathcal I_{X'_{-\infty}}\otimes 
\mathcal O_{X'}(tL))=0
$$ 
for $t\in \mathbb Z$ with 
$-r<t\leq -1$. Hence, we obtain 
$$
\dim F'=\deg \chi (X', \mathcal I_{X'_{-\infty}}\otimes 
\mathcal O_{X'}(tL))\geq r-1. 
$$ 

This means that the dimension of every positive-dimensional 
irreducible 
component of $F$ is $\geq r-1$. 
\end{step}
\begin{step}\label{f-5.2-step2}
In Steps \ref{f-5.2-step2} and \ref{f-5.2-step3}, 
we will prove that $f^*f_*\mathcal O_X(L)\to \mathcal O_X(L)$ 
is surjective at every point of $F$. 

By Lemma \ref{f-lem5.1}, we have an increasing sequence 
of real numbers $$0\leq c_0<c_1<\cdots <c_k=1 
$$ 
satisfying the properties in Lemma \ref{f-lem5.1}. 
We consider normal pairs $(X, \Delta+c_iB)$ for $0\leq i\leq k-1$. 
We put $\omega_i=K_X+\Delta+c_iB$. 
Then $[X, \omega_i]$ is a quasi-log scheme for $0\leq i\leq k-1$ 
(see Example \ref{f-ex2.14}). 
We put 
$$
Z_i=\bigcup _{j\in I_i}C_j\cup \Nqlc (X, \omega_i)
$$ 
and consider the pair $[Z_i, \omega_i|_{Z_i}]$ for every $i$ with 
$0\leq i\leq k-1$. 
Then, by adjunction (see Theorem \ref{f-thm2.13} (i)), 
$[Z_i, \omega_i|_{Z_i}]$ is a quasi-log scheme with 
$f(Z_i)=P$ for $0\leq i\leq k-1$. 
We note that 
$\Nqlc (X, \omega_0)=\emptyset$ since 
$(X, \Delta+c_0B)$ is log canonical by definition. 
We also note that $(Z_i)_{-\infty}=\Nqlc (Z_i, \omega_i|_{Z_i})
=\Nqlc (X, \omega_i)$ for 
every $i$ by 
construction. 
Since $L-\omega_i$ is numerically equivalent to $$L-(K_X+\Delta)
\equiv_Y (r+1)L$$ over $Y$, 
$L-\omega_i$ is $f$-ample. 
Therefore, by Theorem \ref{f-thm2.13} (ii), 
$$H^1(X, \mathcal I_{Z_i}\otimes \mathcal O_X(L))=0, 
$$ 
where $\mathcal I_{Z_i}$ is the defining ideal sheaf of 
$Z_i$ on $X$. 
Hence, the restriction map 
\begin{equation}\label{f-eq5.1}
H^0(X, \mathcal O_X(L))\to H^0(Z_i, \mathcal O_{Z_i}(L))
\end{equation} 
is surjective for every $0\leq i\leq k-1$. 
\end{step}
\begin{step}\label{f-5.2-step3}
Since $[Z_0, \omega_0|_{Z_0}]$ is a projective quasi-log canonical 
pair such that 
$$\omega_0|_{Z_0}+rL|_{Z_0}\equiv 0$$ with 
$r>\dim F-1\geq \dim Z_0-1$, the complete linear system 
$|L|_{Z_0}|$ is basepoint-free by Corollary 
\ref{f-cor4.3}. 

If $|L|_{Z_i}|$ is basepoint-free, 
then the relative base locus $\Bs_f\!|L|$ is disjoint 
from $Z_i$ by \eqref{f-eq5.1}. 
By Lemma \ref{f-lem5.1}, $\Nqlc(X, \omega_{i+1})\subset 
Z_i$ holds set theoretically. 
This implies that $\Bs_f\!|L|$ does not intersect with 
$\Nqlc(X, \omega_{i+1})=\Nqlc(Z_{i+1}, \omega_{i+1}|_{Z_{i+1}})=
(Z_{i+1})_{-\infty}$. 
Therefore, $|L|_{(Z_{i+1})_{-\infty}}|$ is basepoint-free. 
Since 
$$
\omega_{i+1}|_{Z_{i+1}}+rL|_{Z_{i+1}}\equiv 0
$$ 
with $r>\dim F-1\geq \dim Z_{i+1}-1$, 
$|L|_{Z_{i+1}}|$ is basepoint-free by Theorem 
\ref{f-thm1.6}. 
We repeat this process. 
We note that $F=\Nlc(X, \Delta+B)=\Nqlc(X, \omega_k)$ set theoretically. 
Hence we finally obtain that the complete linear system 
$|L|_{Z_{k-1}}|$ is basepoint-free and that the 
relative base locus $\Bs_f\!|L|$ is disjoint from $F=\Nqlc(X, \omega_k)$, 
equivalently, $f^*f_*\mathcal O_X(L)\to \mathcal O_X(L)$ is 
surjective at every point of $F$. 
\end{step}
We obtained all the desired statements. 
\end{proof}

\begin{proof}[Proof of Corollary \ref{f-cor1.2}]
We assume that $K_X+\Delta+(n+1)L$ is not $f$-nef. 
Then, by the cone and contraction theorem for log canonical 
pairs (see \cite[Theorem 1.1]{fujino-fund}), 
we get a $(K_X+\Delta+(n+1)L)$-negative 
extremal contraction $\varphi\colon X\to W$ over $Y$. 
Thus, by replacing $f\colon X\to Y$ with $\varphi\colon X\to W$, we may assume that 
the relative Picard number $\rho(X/Y)=1$. 
Therefore, there exists $r$ with 
$r>n+1$ such that $K_X+\Delta+rL$ is relatively numerically trivial over $Y$. 
By Theorem \ref{f-thm1.1}, 
we have 
$$
n= \dim X\geq r-1>n. 
$$ 
This is a contradiction. This means that 
$K_X+\Delta+(n+1)L$ is $f$-nef. 
Similarly, we can check that 
$K_X+\Delta+nL$ is $f$-nef when $\dim Y\geq 1$. 
\end{proof}

We close this section with the proof of 
Corollary \ref{p-cor1.3}. 

\begin{proof}[Proof of Corollary \ref{p-cor1.3}]
Without loss of generality, we may assume that $Y$ is affine 
by shrinking $Y$ around $f(F)$. 
Since $\dim X=\dim Y$ and $f$ is surjective, 
$f$ is generically finite. 
Hence $f_*\mathcal O_X(-L)\ne 0$ holds. Thus 
we can take an effective Cartier divisor $D$ on $X$ 
such that $D\sim -L$. 
Since $(X, \Delta)$ is kawamata log terminal, $(X, \Delta
+\varepsilon D)$ is also kawamata log terminal 
for $0<\varepsilon \ll 1$. 
By construction, 
$$K_X+\Delta+\varepsilon D+(r+\varepsilon)L$$ 
is relatively numerically trivial over $Y$. 
Therefore, by Theorem \ref{f-thm1.1}, 
the dimension of every positive-dimensional irreducible 
component of $F$ is $\geq (r+\varepsilon) -1$, 
that is, $\geq \lfloor r\rfloor$. 
If $\dim F\leq r+1$, then $\dim F<(r+\varepsilon)+1$ obviouly 
holds. Thus, by Theorem \ref{f-thm1.1}, 
$$f^*f_*\mathcal O_X(L)\to \mathcal O_X(L)$$ is surjective 
at every point of $F$. 
\end{proof}

\section{Generalizations for quasi-log canonical pairs}\label{f-sec6}

In this section, we will prove Theorem \ref{f-thm1.7}. 
The following lemma is a generalization of Lemma \ref{f-lem5.1} 
for quasi-log canonical pairs. 

\begin{lem}\label{f-lem6.1}
Let $[X, \omega]$ be an irreducible quasi-log canonical 
pair and let $B$ be an effective 
$\mathbb R$-Cartier divisor on $X$. 
Then there exist an increasing sequence of real numbers 
$$
c_{-1}=0\leq c_0< c_1< \cdots <c_{k-1}<c_k=1, 
$$ 
globally embedded simple normal crossing pairs 
$(Y_i, B_{Y_i})$ for $0\leq i \leq k$, and 
proper surjective morphisms $f_i\colon  Y_i\to X$ for 
all $0\leq i\leq k$ with the following properties. 
\begin{itemize}
\item[(i)] For every $0\leq i\leq k$, 
$$
\left(X, \omega+c_iB, f_i\colon  (Y_i, B_{Y_i})\to X\right)
$$ 
is a quasi-log scheme. 
\item[(ii)] We put $$U_i=X\setminus \Nqlc(X, \omega+c_iB)$$ for 
every $0\leq i\leq k$.  
Then 
$$
U_k\subsetneq U_{k-1}\subsetneq \cdots \subsetneq U_0=X 
$$ 
holds. 
\item[(iii)] For every $0\leq i\leq k$, 
$$
\left(X, \omega+tB, f_i\colon  (Y_i, B_{Y_i}+(t-c_i)f_i^*B)
\to X\right)
$$ 
is a quasi-log scheme such that 
$$
U_i=X\setminus \Nqlc (X, \omega+tB)
$$ 
holds for any $t\in (c_{i-1}, c_i]$. 
\item[(iv)] For each $0\leq i\leq k-1$, there exists a 
finite set of qlc centers $\{C_j\}_{j\in I_i}$ of 
$[X, \omega+c_iB]$ such that 
$$
U_i\setminus U_{i+1} \subset \bigcup _{j\in I_i}C_j
$$ 
holds. 
\end{itemize}
\end{lem}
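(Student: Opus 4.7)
The plan is to mimic the proof of Lemma \ref{f-lem5.1}—which handled the log canonical case—within the quasi-log framework, using Lemma \ref{f-lem3.1} to install the quasi-log structure on $[X, \omega + tB]$ for each $t \in [0, 1]$ simultaneously.

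First I would fix a quasi-log resolution $f\colon (Y, B_Y) \to X$ of $[X, \omega]$, and by the standard blow-up argument of \cite[Proposition 6.3.1]{fujino-foundations} applied to the ambient space of $(Y, B_Y)$, I may assume that $(Y, B_Y + f^*B)$ is a globally embedded simple normal crossing pair. Lemma \ref{f-lem3.1}, applied with the effective divisor $tB$ in place of $B$, then yields for each $t \in [0, 1]$ a quasi-log scheme
\[
\bigl(X, \omega + tB, \ f\colon (Y, B_Y + t f^*B) \to X\bigr),
\]
whose non-qlc locus is computed by the formula in Definition \ref{f-def2.10}.

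The jumping numbers emerge from coefficient analysis on $Y$. For each prime divisor $E \subset \Supp(B_Y + f^*B)$, write $b_E$ and $d_E$ for the coefficients of $E$ in $B_Y$ and $f^*B$ respectively; then the coefficient of $E$ in $B_Y + tf^*B$ is $b_E + t d_E$, and this quantity crosses the value $1$ at $t_E := (1-b_E)/d_E$ whenever $d_E > 0$ and $b_E < 1$. Only finitely many $E$ have $d_E > 0$, so the set of such $t_E$ lying in $[0, 1)$ is finite; I list them in increasing order as $c_0 < c_1 < \cdots < c_{k-1}$ and set $c_{-1} := 0$, $c_k := 1$. Taking $(Y_i, B_{Y_i}) := (Y, B_Y + c_i f^*B)$ and $f_i := f$ for every $i$, property (i) is immediate, and for (iii) the identity $B_{Y_i} + (t - c_i) f_i^*B = B_Y + t f^*B$ shows the two quasi-log scheme structures coincide; moreover, on each interval $(c_{i-1}, c_i]$ no coefficient $b_E + t d_E$ strictly exceeds $1$ more than at $t = c_i$, so $\Nqlc(X, \omega + tB)$ is constant there and equals $\Nqlc(X, \omega + c_i B)$.

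For (ii), $U_0 = X$ holds because at $t = c_0$ no coefficient has yet strictly exceeded $1$ so $[X, \omega + c_0 B]$ remains quasi-log canonical, and the strict inclusions $U_{i+1} \subsetneq U_i$ follow because at $t = c_i$ at least one prime divisor $E_j$ with $t_{E_j} = c_i$ has coefficient exactly $1$ in $B_{Y_i}$ but $>1$ in $B_{Y_i} + \varepsilon f_i^*B$ for every $\varepsilon > 0$, so $f_i(E_j)$ is newly added to the non-qlc locus as $t$ crosses $c_i$. Property (iv) then follows by taking $I_i := \{j : t_{E_j} = c_i\}$: the images $C_j := f_i(E_j)$ are qlc centers of $[X, \omega + c_i B]$ (their coefficient is exactly $1$, so they are strata lying outside the non-qlc locus), and $U_i \setminus U_{i+1} \subset \bigcup_{j \in I_i} C_j$ by the coefficient analysis. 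The main obstacle is the careful bookkeeping of which coefficients cross $1$ at each $c_i$, together with the verification that the \emph{same} quasi-log resolution $f$ works for every parameter $t$—this is exactly what the simple normal crossing preparation in the first step guarantees, and explains why the proof in the quasi-log case can reuse a single model rather than invoking a non-trivial analogue of adjunction at each step.
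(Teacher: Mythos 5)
Your plan to run the whole construction on a single quasi-log resolution $f\colon (Y,B_Y)\to X$ via Lemma \ref{f-lem3.1} has a genuine gap at the very first step: you need $f^*B$ to be a well-defined $\mathbb R$-divisor on $Y$ with $(Y, B_Y+f^*B)$ a globally embedded simple normal crossing pair, and this can fail in a way that no blow-up of the ambient space repairs. The resolution $Y$ is in general reducible, and its irreducible components dominate various qlc strata of $[X,\omega]$; if some component $Y_1$ of $Y$ maps \emph{into} $\Supp B$ (equivalently, the qlc center it dominates is contained in $\Supp B$), then the local equations of $B$ pull back to zero on $Y_1$ and $f^*B$ is not a divisor there, so Lemma \ref{f-lem3.1} simply does not apply. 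This is exactly the case the paper's proof is built around: Step 1 sets $c_0=0$ as soon as some qlc center lies in $\Supp B$, and Step 3 proceeds inductively, at each stage forming $X_i'$ (the union of $\Nqlc(X,\omega+c_iB)$ with all qlc centers contained in $\Supp B$), discarding the components $Y_i'$ of $Y_i$ lying over $X_i'$, and invoking Lemma \ref{f-lem3.2} --- whose proof genuinely uses the adjunction argument of Theorem \ref{f-thm2.13} (i) to identify the resulting ideal sheaf --- to put a quasi-log structure on $[X,\omega+tB]$ using only $Y_i''=Y_i-Y_i'$. So your closing remark that one can ``reuse a single model rather than invoking a non-trivial analogue of adjunction at each step'' is precisely the point at which the argument breaks; the model must change (shrink) each time new qlc centers fall into $\Supp B$, which happens at every jumping number.

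Two smaller issues. First, in your verification of (iv) you assert that a divisor $E_j$ with coefficient exactly $1$ at $t=c_i$ gives a qlc center $f_i(E_j)$ of $[X,\omega+c_iB]$ because it is a stratum; but by Definition \ref{f-def2.10} a stratum whose image is contained in the non-qlc locus is \emph{not} a qlc center, so you must additionally discard those $E_j$ with $f_i(E_j)\subset\Nqlc(X,\omega+c_iB)$ (harmless for the inclusion in (iv), but the statement as you wrote it is false). Second, coefficients of $B_Y$ crossing $1$ need not strictly enlarge the non-qlc locus when the corresponding images were already inside it, so some of your candidate jumping numbers must be deleted to get the strict inclusions in (ii). Both of these are repairable; the reliance on a single model is not.
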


Before we prove Lemma \ref{f-lem6.1}, we make an important remark. 

\begin{rem}\label{f-rem6.2} 
In Lemma \ref{f-lem6.1}, 
let $f\colon (Y, B_Y)\to X$ be a quasi-log resolution of $[X, \omega]$. 
Let $X'$ be the union of all qlc centers of $[X, \omega]$ contained 
in $\Supp B$. 
Assume that the union of all strata of $(Y, B_Y)$ mapped to $X'$ by $f$, 
which is denoted by $Y'$, is a union of 
some irreducible components of $Y$. 
We put 
$Y''=Y-Y'$, $K_{Y''}+B_{Y''}=(K_Y+B_Y)|_{Y''}$, 
and $f''=f|_{Y''}$. 
By \cite[Proposition 6.3.1]{fujino-foundations} and 
\cite[Theorem 3.35]{kollar}, we may further assume that 
$$
(Y'', B_{Y''}+(f'')^*B)
$$ 
is a globally embedded simple normal crossing pair. 
Then, by Lemma \ref{f-lem3.2}, 
$$
\left(X, \omega+B, f''\colon  (Y'', B_{Y''}+(f'')^*B)\to X\right)
$$ 
is a quasi-log scheme. 
By the following proof of Lemma \ref{f-lem6.1}, 
we see that 
$$
\Nqlc(X, \omega+B)=\bigcup_{i=0}^{k-1}\left(\bigcup _{j\in I_i} C_j\right)
$$ 
holds set theoretically. 
\end{rem}

We give a detailed proof of Lemma \ref{f-lem6.1} for 
the reader's convenience, although it is similar to 
the proof of Lemma \ref{f-lem5.1}.  

\begin{proof}[Proof of Lemma \ref{f-lem6.1}]
\setcounter{step}{0}
Let $f\colon (Y, B_Y)\to X$ be a quasi-log resolution of $[X, \omega]$. 
\begin{step}\label{f-6.1-step1} 
If there exists a qlc center $C$ of $[X, \omega]$ such that 
$C\subset \Supp\!B$. 
Then we put $c_0=0$, $(Y_0, B_{Y_0})=(Y, B_Y)$, and $f_0=f$. 
\end{step}
\begin{step}\label{f-6.1-step2} 
We assume that there are no qlc centers of $[X, \omega]$ contained 
in $\Supp\!B$. 
By \cite[Proposition 6.3.1]{fujino-foundations} and 
\cite[Theorem 3.35]{kollar}, we may assume that 
$$
\left(Y, f^*B+\Supp\!B_Y\right)
$$ 
is a globally embedded simple normal crossing pair. 

If $(B_Y+f^*B)^{>1}=0$, then we put $c_0=1$, 
$(Y_0, B_{Y_0})=(Y, B_Y+f^*B)$, $f_0=f$, and 
we stop this process (see Lemma \ref{f-lem3.1}). 

If $(B_Y+f^*B)^{>1}\ne 0$, then we can take $0<c_0<1$ such that 
$(B_Y+c_0f^*B)^{>1}=0$ and that 
there exists a component $G$ of $(B_Y+c_0f^*B)^{=1}$ 
with $f(G)\subset \Supp\!B$. 
In this situation, we put $(Y_0, B_{Y_0})=(Y, B_Y+c_0f^*B)$ and 
$f_0=f$. 
Then we see that 
$$
\left(X, \omega+c_0B, f_0\colon  (Y_0, B_{Y_0})\to X\right) 
$$ 
is the desired quasi-log canonical pair (see 
Lemma \ref{f-lem3.1}). 
\end{step}
\begin{step}\label{f-6.1-step3}
We assume that we have already constructed 
$$
\left(X, \omega+c_iB, f_i\colon  (Y_i, B_{Y_i})\to X\right)
$$ 
for $i\geq 0$ with $c_i<1$. 

Let $X'_i$ be the union of $\Nqlc (X, \omega+c_iB)$ and all qlc centers of 
$[X, \omega+c_iB]$ contained in $\Supp\!B$. 
By \cite[Proposition 6.3.1]{fujino-foundations}, 
we may assume that the union of all strata of $(Y_i, B_{Y_i})$ mapped to $X'_i$ 
by $f_i$, which is denoted by $Y'_i$, is a union of 
some irreducible components of $Y_i$. 
We put $Y''_i=Y_i-Y'_i$, $$K_{Y''_i}+B_{Y''_i}
=(K_{Y_i}+B_{Y_i})|_{Y''_i},$$ and 
$f''_i=f_i|_{Y''_i}$. 
We may further assume that 
$$
\left(Y''_i, (f''_i)^*B+\Supp\!B_{Y''_i}\right)
$$ 
is a globally embedded simple normal crossing pair 
by \cite[Proposition 6.3.1]{fujino-foundations} and 
\cite[Theorem 3.35]{kollar}. 

If 
$$
f''_i\left(\Supp\!\left(B_{Y''_i}+(1-c_i)(f''_i)^*B\right)^{>1}\right)\subset 
X'_i
$$ 
holds, then we put 
$c_{i+1}=1$, 
$$
\left(Y_{i+1}, B_{Y_{i+1}}\right)=\left(
Y''_i, B_{Y''_i}+(1-c_i)(f''_i)^*B\right), 
$$ 
$f_{i+1}=f''_i$, and we stop this process. 
We can see that 
$$
\left(X, \omega+B, f_{i+1}\colon  (Y_{i+1}, B_{Y_{i+1}})\to X\right)
$$ 
with $c_{i+1}=1$ is a quasi-log scheme with the desired 
properties (see Lemma \ref{f-lem3.2}). 

Otherwise, we put 
$$
c_{i+1}=\sup \left\{s\in \mathbb R \, \left|\, 
f''_i\left(\Supp\!\left(B_{Y''_i}+(s-c_i)(f''_i)B\right)^{>1}\right)\subset 
X'_i\right.\right\}. 
$$ In this situation, 
we have $c_i<c_{i+1}<1$. 
Then we put 
$$
\left(Y_{i+1}, B_{Y_{i+1}}\right)=\left(
Y''_i, B_{Y''_i}+(c_{i+1}-c_i)(f''_i)^*B\right) 
$$ and $f_{i+1}=f''_i$. 
We can see that 
$$
\left(X, \omega+c_{i+1}B, f_{i+1}\colon  (Y_{i+1}, B_{Y_{i+1}})\to X\right)
$$ 
is a quasi-log scheme with the desired 
properties (see Lemma \ref{f-lem3.2}). 

\end{step}
\begin{step}\label{f-6.1-step4}
After finitely many steps, we get a finite increasing 
sequence of real numbers: 
$$
c_{-1}=0\leq c_0< c_1< \cdots <c_{k-1}<c_k=1. 
$$ 
By the above construction, we obviously have the desired 
properties. 
\end{step}
Roughly speaking, $c_i-c_{i-1}$ is the quasi-log canonical threshold 
of $[X, \omega+c_{i-1}B]$ with respect to $B$ on 
the Zariski open set $X\setminus X'_{i-1}$ of $X$ 
for $1\leq i\leq k-1$. Hence we can see this lemma as a quasi-log 
scheme analogue of Lemma \ref{f-lem5.1}. 
\end{proof}

We give a sketch of the proof of Theorem \ref{f-thm1.7} 
for the reader's convenience, although the 
proof of Theorem \ref{f-thm1.7} is essentially 
the same as that of Theorem \ref{f-thm1.1}. 

\begin{proof}[Sketch of Proof of Theorem \ref{f-thm1.7}]
We divide the proof into several small steps. 
\setcounter{step}{0}
\begin{step}\label{f-1.7-step1}
Since $\mathcal L-\omega\equiv (r+1)\mathcal L$ is $\varphi$-ample, 
we have $R^1\varphi_*(\mathcal I_{X_i}\otimes \mathcal L)=0$, 
where $X_i$ is any irreducible component of $X$ and 
$\mathcal I_{X_i}$ is the defining ideal sheaf of $X_i$ on $X$. 
Therefore, the restriction map 
$$
\varphi_*\mathcal L\to \varphi_*(\mathcal L|_{X_i})
$$ 
is surjective. We note that $[X_i, \omega|_{X_i}]$ is 
a quasi-log canonical pair by adjunction (see Theorem \ref{f-thm2.13} (i)). 
We also note that $\omega|_{X_i}+r\mathcal 
L|_{X_i}$ is relatively numerically trivial over $W$. 
Therefore, by replacing $[X, \omega]$ with $[X_i, \omega|_{X_i}]$, 
we may assume that $X$ is irreducible. 
Furthermore, by replacing $W$ with $\varphi(X)$, 
we may assume that $W$ is an irreducible variety. 
By shrinking $W$ around $\varphi(F)$, we may further assume 
that $W$ is an affine variety. 
\end{step}
\begin{step}\label{f-step1.7-step2}
If $\varphi(X)=W$ is a point, 
then we have $X=F$ and may assume that 
$\dim X=\dim F\geq 1$ holds. 
In this case, by Lemma \ref{f-lem4.1}, $r\leq \dim F+1$ holds. 
This means that $\dim F\geq r-1$ holds true. 
If $\dim F<r+1$, equivalently, $r>\dim F-1=\dim X-1$, 
then $|\mathcal L|$ is basepoint-free by Lemma \ref{f-lem4.1} again. 
This means that 
$$\varphi^*\varphi_*\mathcal L\to \mathcal L$$ is surjective 
at every point of $F$. 
\end{step}
\begin{step}\label{f-1.7-step3}
From now on, we may assume that 
$\dim W\geq 1$ holds. We put $n=\dim X$ and take general 
hyperplane sections 
$B_1, \ldots, B_{n+1}$ on $W$ such that 
$\varphi(F)\in \Supp\!B_i$ for every $i$. 
We put 
$$
B=\sum _{i=1}^{n+1} \varphi^*B_i. 
$$
\end{step}
\begin{step}\label{f-1.7-step4} 
Let $F'$ be any positive-dimensional irreducible component of $F$. 

If $F'$ is a qlc center of $[X, \omega]$. 
Then $[F', \omega|_{F'}]$ is a quasi-log canonical 
pair by adjunction (see Theorem \ref{f-thm2.13} (i)). Hence we obtain 
$$
\dim F'=\deg \chi (F', \mathcal L^{\otimes t}|_{F'})\geq r-1
$$ 
by the usual application of the vanishing theorem 
(see Theorem \ref{f-thm2.13} (ii) and 
Step \ref{f-4.1-step1} in the proof of Lemma \ref{f-lem4.1}).  

From now on, we may assume that $F'$ is not a qlc center 
of $[X, \omega]$. 
Let $f\colon (Y, B_Y)\to X$ be a quasi-log resolution of $[X, \omega]$. 
Let $X'$ be the union of 
all qlc centers contained in $F$. 
By \cite[Proposition 6.3.1]{fujino-foundations}, 
we may assume that 
the union of all strata of $(Y, B_Y)$ mapped to 
$X'$ by $f$, which is denoted by $Y'$, is a union of 
some irreducible components of $Y$. 
We put $Y''=Y-Y'$, $K_{Y''}+B_{Y''}=(K_Y+B_Y)|_{Y''}$, 
and $f''=f|_{Y''}$. 
We may further assume that 
$$
\left(Y'', (f'')^*B +\Supp\!B_{Y''}\right)
$$ 
is a globally embedded simple normal crossing pair by 
\cite[Proposition 6.3.1]{fujino-foundations} and 
\cite[Theorem 3.35]{kollar}. 
By \cite[Lemma 6.3.13]{fujino-foundations}, we can take 
$0<c<1$ such that 
there exists an irreducible component 
$G$ of $(B_{Y''}+c(f'')^*B)^{=1}$ with 
$f''(G)=F'$ and 
that $F'\not\subset f''\left(\Supp\!\left(B_{Y''}
+c(f'')^*B\right)^{>1}\right)$. 
Then 
$$
\left(X, \omega+cB, f''\colon  \left(Y'', B_{Y''}+c(f'')^*B\right)
\to X\right)
$$ 
is a quasi-log scheme such that 
$F'$ is a qlc center of $[X, \omega+cB]$ (see 
Lemma \ref{f-lem3.2}). 
We put 
$$
X'=F'\cup \Nqlc (X, \omega+cB). 
$$ 
Then, by adjunction (see Theorem \ref{f-thm2.13} (i)), 
$[X', (\omega+cB)|_{X'}]$ is a quasi-log scheme. 
By construction, 
$$
\dim F'=\deg \chi (X', \mathcal I_{X'_{-\infty}}\otimes 
\mathcal L^{\otimes t})\geq r-1
$$ 
as in Step \ref{f-5.2-step1} in the proof of Theorem \ref{f-thm1.1}. 
\end{step}
\begin{step}\label{f-1.7-step5}
We note that 
$$
\left(X, \omega+B, f''\colon  (Y'', B_{Y''}+(f'')^*B)\to X\right)
$$ 
is a quasi-log scheme (see Lemma \ref{f-lem3.2}) 
such that $\Nqlc(X, \omega+B)=F$ holds set theoretically 
(see \cite[Lemma 6.3.13]{fujino-foundations}). 
By Lemma \ref{f-lem6.1}, 
the arguments in Steps \ref{f-5.2-step2} and \ref{f-5.2-step3} in the 
proof of Theorem \ref{f-thm1.1} work with some minor modifications. 
Hence, we obtain 
that 
$$
\varphi^*\varphi_*\mathcal L\to \mathcal L
$$ 
is surjective at every point of $F=\Nqlc(X, \omega+B)$. 
\end{step} 
We obtained all the desired statements. 
\end{proof}

\begin{proof}[Proof of Corollary \ref{f-cor1.8}]
We note that the cone and contraction theorem holds 
true for quasi-log canonical pairs (see 
\cite[Theorems 6.7.3 and 6.7.4]{fujino-foundations}). 
Therefore, the proof of Corollary \ref{f-cor1.2} works 
as well in this case, 
using Theorem \ref{f-thm1.7}.  
\end{proof}


\end{document}